\DeclareMathAlphabet{\mathpzc}{OT1}{pzc}{m}{it}
\newtheorem{theorem}{Theorem}[section]
\newtheorem{lemma}[theorem]{Lemma}
\newtheorem{prop}[theorem]{Proposition}
\theoremstyle{definition}
\newtheorem{definition}[theorem]{Definition}
\newtheorem{example}[theorem]{Example}
\newtheorem{problem}[theorem]{Problem} 
\newtheorem{question}[theorem]{Question} 
\newtheorem{corollary}[theorem]{Corollary}
\theoremstyle{remark}
\newtheorem{remark}[theorem]{Remark}
\numberwithin{equation}{section}
\newcommand{\set}[1]{\ensuremath{\mathbb{#1}}}
\newcommand{\slantone}[2]{{\raisebox{.1em}{$#1$}\left/\raisebox{-.1em}{$#2$}\right.}}
\newcommand*{\defeq}{\mathrel{\vcenter{\baselineskip0.5ex \lineskiplimit0pt
                     \hbox{\scriptsize.}\hbox{\scriptsize.}}}%
                     =}
\newcommand\asim{\mathrel{%
  \ooalign{\raise0.1ex\hbox{$\sim$}\cr\hidewidth\raise-0.8ex\hbox{\scalebox{0.9}{$\scriptstyle{x}$}}\hidewidth\cr}}}
\newcommand{\proj}[1]{\ensuremath{\mathbb{P}^{#1}}}
\newcommand{\mani}{\ensuremath{\mathpzc{M}}}
\newcommand{\manir}{\ensuremath{\mathpzc{M}_{red}}}
\newcommand{\stsheaf}{\ensuremath{\mathcal{O}_{\mathpzc{M}}}}
\newcommand{\stsheafred}{\ensuremath{\mathcal{O}_{\mathpzc{M}_{red}}}}
\newcommand{\beq}{\begin{equation}}
\newcommand{\eeq}{\end{equation}}
\newcommand{\bear}{\begin{eqnarray}}
\newcommand{\eear}{\end{eqnarray}}
\begin{document}

\title{Non Projected Calabi-Yau Supermanifolds over $\mathbb{P}^{2}$}


\author{Sergio L. Cacciatori}
\address{Dipartimento di Scienza e Alta Tecnologia, Universit\`a dell'Insubria, Via Valleggio 11, 22100 Como, Italy and INFN, Sezione di Milano, Via Celoria 16, 20133 Milano, Italy}
\email{sergio.cacciatori@uninsubria.it}

\author{Simone Noja}
\address{Dipartimento di Scienza e Alta Tecnologia, Universit\`a dell'Insubria, Via Valleggio 11, 22100 Como, Italy and INFN, Sezione di Milano, Via Celoria 16, 20133 Milano, Italy}
\email{simone.noja@uninsubria.it}

\author{Riccardo Re}
\address{Dipartimento di Scienza e Alta Tecnologia, Universit\`a dell'Insubria, Via Valleggio 11, 22100 Como, Italy }
\email{riccardo.re@uninsubria.it}




\begin{abstract} 
We start a systematic study of non-projected supermanifolds, concentrating on supermanifolds with fermionic dimension 2 and with the reduced manifold  a complex projective space. We show that all the non-projected supermanifolds of dimension $2|2$ over $\mathbb P^2$ are completely characterised by a non-zero cohomology class $\omega\in H^1(\mathcal{T}_{\mathbb{P}^2}(-3))$ and by a locally free sheaf $\mathcal F_\mani$ of rank $0|2$, satisfying $Sym^2 \mathcal{F}_\mani \cong K_{\proj 2}$. Denoting such supermanifolds with $\mathbb P^{2}_\omega(\mathcal F_\mani)$, we show that all of them are Calabi-Yau supermanifolds and, when $\omega \neq 0$, they are non-projective, that is they cannot be embedded into any projective 
superspace $\proj {n|m}$. Instead, we show that every non-projected supermanifold over $\proj 2$ admits an embedding into a super Grassmannian. By contrast, we give an example of a supermanifold $\mathbb P^{2}_\omega(\mathcal F_\mani)$ that cannot be embedded in any of the $\Pi$-projective superspaces $\mathbb P^{n}_{\Pi}$ introduced by Manin and 
Deligne. However, we also show  that when $\mathcal F_\mani$ is the cotangent bundle over $\proj 2$, then the non-projected $\mathbb{P}^2_\omega(\mathcal F_\mani)$ and the $\Pi$-projective plane $\mathbb P^{2}_{\Pi}$ do coincide.
\end{abstract}

\maketitle

\tableofcontents

\section{Introduction} 
In the present paper we take on a systematic study of supergeometry with particular attention to non-projected supermanifolds having odd dimension 2.  Once we have settled our conventions, we consider the 
obstruction of supermanifolds to be projected. We then concentrate on supermanifolds having a complex 
projective space $\mathbb{P}_{\mathbb{C}}^n$ as the associated reduced manifold, and we study the most general conditions for such supermanifolds to be non-projected. We get that only varieties of bosonic dimension 1 and 2 admit non-projected structures. We completely classify non-projected 
supermanifolds over $\mathbb P^1$ having odd dimension $2$, and, over $\mathbb P^2$, we provide a necessary and sufficient condition for the supermanifold to be non-projected. Remarkably, those supermanifolds are Calabi-Yau supermanifolds \cite{NCR}. Moreover, we prove that all these supermanifolds are non-projective, \emph{i.e.}\ they cannot be embedded into a split projective superspace.\\
Indeed we think that the role of embedding spaces in supergeometry should be played by super Grassmannians: along this line of thought, we prove that all of the non-projected supermanifolds over $\proj 2$ can be embedded into a certain super 
Grassmannian, Theorem \ref{embeddingth}. \\
We then study in detail the supermanifolds corresponding to two different choices of the fermionic sheaf $\mathcal F_\mani$ satisfying the aforementioned condition: the case when $\mathcal F_\mani$ is the decomposable sheaf 
$\mathcal O_{\proj 2}(-1)\oplus \mathcal O_{\proj 2}(-2)$, and the non-decomposable case when $\mathcal F_\mani$ is the cotangent bundle $\Omega^1_{\proj 2}$ of $\mathbb P^2$.  In the decomposable case, we show that it is neither projective, nor $\Pi$-projective. 
We then show instead that the second example coincides with the $\Pi$-projective plane $\mathbb P^{2}_\Pi$ introduced by Manin in \cite{Manin} in a completely 
different way. Actually, it can be shown that this unexpected correspondence is just a particular case of a general fact \cite{Noja}, showing that $\Pi$-projective geometry arises naturally as one considers the cotangent bundle of projective spaces as the 
fermionic bundle over $\proj n$. 
\

\noindent We conclude this section by briefly settling our conventions. \vspace{3pt}\\

\noindent \emph{Conventions.}  We will refer mainly to \cite{Manin}, \cite{ManinNC} and \cite{VoMaPe} for the main definitions and notions in supergeometry and we limit ourselves just to fix some notations and to point up what is particularly relevant in what follows. 

Given a supermanifold $\mani=(|\mani|,\stsheaf)$ we will call $\mathcal{J}_\mani$ the sheaf of ideals generated by all the (nilpotent) odd sections, hence a \emph{nilpotent sheaf}, and we denote $\manir=(|\mani|, \slantone{\stsheaf}{\mathcal{J}_\mani})$ the \emph{reduced manifold} underlying $\mani.$

The structure sheaf $\stsheaf $, the reduced structure sheaf $\stsheafred =\slantone{\stsheaf}{\mathcal{J}_\mani}$ and the nilpotent sheaf $\mathcal{J}_\mani$ fit together in a short exact sequence of $\mathcal{O}_\mani$-modules: 
\bear
\xymatrix@R=1.5pt{ 
0 \ar[rr] && \mathcal{J}_\mani \ar[rr] & &  \stsheaf \ar[rr] && \stsheafred \ar[rr] && 0.
 }
\eear
We call this short exact sequence the \emph{structural exact sequence} for the supermanifold $\mani.$

A very natural question that arises looking at the structural exact sequence above is whether it is split or not. 
When it is split, that is when there exists a morphism $\pi : \mani \rightarrow \manir$, satisfying\footnote{Denoting with $\iota$ the embedding of $\manir$ in $\mani$, we call $\iota^\sharp$ and $\pi^\sharp$ the induced homomorphisms on the structure sheaves of algebras.} 
$\pi \circ \iota = id_{\manir}$, splitting the structural exact sequence of $\mani$ as follows
\bear
\xymatrix@R=1.5pt{ 
0 \ar[rr] && \mathcal{J}_\mani \ar[rr] & &  \stsheaf  \ar[rr]_{\iota^\sharp} && \ar@{-->}@/_1.3pc/[ll]_{\pi^\sharp} \stsheafred \ar[rr] && 0,
 }
\eear
then we say that $\mani $ is a \emph{projected supermanifold}. 
In particular, the structure sheaf of a projected supermanifold is given by the direct sum $\stsheaf = \stsheafred \oplus \mathcal{J}_\mani$ and the structural exact sequence becomes
\bear
\xymatrix@R=1.5pt{ 
0 \ar[rr] && \mathcal{J}_\mani \ar[rr] & &  \stsheafred \oplus \mathcal{J}_\mani  \ar[rr] && \stsheafred \ar[rr] && 0.
 }
\eear
For any supermanifold $\mani$ with odd dimension $q$ and nilpotent sheaf $\mathcal{J}_\mani$, we have a 
$\mathcal{J}_\mani$-\emph{adic filtration on} $\stsheaf$ of length $q$
\bear
\mathcal{J}_\mani^0 \defeq \stsheaf \supset \mathcal{J}_\mani \supset \mathcal{J}_\mani^2 \supset \mathcal{J}_\mani^3 \supset \ldots \supset \mathcal{J}_\mani^{q} \supset \mathcal{J}_\mani^{q+1} = 0.
\eear
We call the superspace $\mbox{{Gr}}\, \mani \defeq (|\mani|, \mbox{{Gr}}\, \stsheaf)$, with
\bear
\mbox{{Gr}}\, \stsheaf  = \stsheafred \oplus \slantone{\mathcal{J}_\mani}{\mathcal{J}^2_{\mani}} \oplus \ldots \oplus \slantone{\mathcal{J}_\mani^{q-1}}{\mathcal{J}^q_{\mani}} \oplus {\mathcal{J}^q_\mani},
\eear
the \emph{split supermanifold associated to} $\mani$. Observe that a split supermanifold is automatically projected.

The quotient $\mathcal{F}_\mani  = \slantone{\mathcal{J}_\mani}{\mathcal{J}^2_\mani}$ is called the \emph{fermionic sheaf} of the supermanifold $\mani.$ Recall that for any $k\geq 0$ the sheaf $\slantone{\mathcal{J}^k_\mani}{\mathcal{J}^{k+1}_\mani}$ 
is canonically a $\stsheafred$ sheaf of modules. 

We say that $\mani$ is a \emph{split supermanifold} if it is isomorphic to its split associated supermanifold  $\mbox{{Gr}} \, \mani.$ 
\begin{remark} It is well known that a supermanifold $\mani$ with $\operatorname{rk}(\mathcal{F}_\mani)=2$ is split if and only it is projected. Indeed if $q=2$ one always has  $\mathcal{J}^j_{\mani}=0$ for any $j\geq 3$, from which it easily follows $(\mathcal{O}_\mani)_1=(\mathcal{J}_{\mani})_1\cong \mathcal{F}_\mani$, and, if $\mani$ is projected,  also that $(\mathcal{O}_\mani)_0=\stsheafred\oplus (\mathcal{J}_{\mani})_0=\stsheafred\oplus Sym^2\mathcal{F}_\mani$, where $(\mathcal{O}_\mani)_0$ and $(\mathcal{O}_\mani)_1$ are the even and odd part of the structure sheaf of the supermanifold respectively. \end{remark}
\noindent Finally, we conclude this section by presenting the most important (non-trivial) examples of complex split supermanifolds: 
\begin{example}[Complex Projective Superspaces] The complex projective superspace of dimensione $n|m$, denoted by $\proj {n|m}$, is the supermanifold defined by the pair $(\proj n , \mathcal{O}_{\proj {n|m}})$, having the ordinary complex projective 
space $\proj n $ as reduced manifold - that completely determines the topological aspects -, while its structure sheaf $\mathcal{O}_{\proj {n|m}}$ is given by  
\bear \label{fact}
\mathcal{O}_{\proj {n|m}} & = &\ \bigoplus_{k\,\rm{even}}\bigwedge^k\mathcal{O}_{\proj n}(-1)^{\oplus m}\oplus \bigoplus_{k\,\rm{odd}}\Pi \bigwedge^k\mathcal{O}_{\proj n}(-1)^{\oplus m} \nonumber \\
\eear
where we have inserted the symbol $\Pi$ as a reminder for the parity reversing. Note, in particular, that the term $k=0$ corresponds to the structure sheaf of the reduced manifold $\mathbb{P}^n.$\\
This expression for the structure sheaf makes it clear that $\proj{n|m} $ is canonically isomorphic to $\mbox{{Gr}}\, \proj{n|m}$ and the projection $\pi : \proj {n|m} \rightarrow \proj n$ embeds, at the level of the structure sheaves, 
$\mathcal{O}_{\proj n}$ into $\mathcal{O}_{\proj {n|m}}$.
\end{example}

\section{Obstruction to Splitting and Projective Spaces}
\noindent  Obstruction theory for complex supermanifolds has been first discussed in the seminal work of Green \cite{Green} and recently extended and applied to moduli spaces of super Riemann surfaces by Donagi and Witten in \cite{DonWit}. Here we will consider the first obstruction to splitting or projectedness  following \cite{Manin}.
Such obstruction  might appear in the case the odd dimension of the supermanifold $\mani$ is at least 2:
\begin{theorem}[Obstruction to Splitting/Projectedness] Let $\mani$ be a (complex) supermanifold of odd dimension 2. Then $\mani$ is projected (and hence split) if and only if the obstruction class $\omega_\mani$ is zero in 
$H^1 (\manir, \mathcal{T}_{\manir} \otimes Sym^2 \mathcal{F}_\mani)$. 
\end{theorem}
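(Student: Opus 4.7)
\emph{Strategy.} The plan is to construct $\omega_\mani$ as a \v{C}ech $1$-cocycle read off from the transition functions of $\mani$ on a split cover, and then to show that its cohomology class vanishes if and only if $\mani$ is isomorphic to its associated split supermanifold $\mbox{Gr}\,\mani$. By the Remark preceding the statement, for odd dimension $2$ the notions of being projected and being split coincide, so this equivalence is exactly what the theorem asserts. The argument is a specialisation of Green's general obstruction theory, drastically simplified by the fact that $\mathcal{J}_\mani^3=0$, so that only the first obstruction can appear.

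\emph{Construction of the cocycle.} First I would fix an open cover $\{U_\alpha\}$ of $|\mani|$ over which $\mani$ is split, with local supercoordinates $(x_\alpha,\theta^1_\alpha,\theta^2_\alpha)$ trivialising both $\stsheafred$ and $\mathcal{F}_\mani$. On each double overlap, the most general isomorphism of structure sheaves that is the identity modulo $\mathcal{J}_\mani$ and respects the associated graded takes the form
\[
x^i_\beta = f^i_{\beta\alpha}(x_\alpha) + g^i_{\beta\alpha}(x_\alpha)\,\theta^1_\alpha\theta^2_\alpha, \qquad \theta^a_\beta = (M_{\beta\alpha})^a_{\ b}(x_\alpha)\,\theta^b_\alpha,
\]
where $\{f_{\beta\alpha}\}$ are transition functions of $\manir$ and $\{M_{\beta\alpha}\}$ of $\mathcal{F}_\mani$. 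The only non-split datum is then the collection $\eta_{\beta\alpha}:=g^i_{\beta\alpha}\,\partial_{x^i_\beta}\otimes\theta^1_\alpha\theta^2_\alpha$, which I would interpret as a section of $\mathcal{T}_{\manir}\otimes Sym^2\mathcal{F}_\mani$ on $U_\alpha\cap U_\beta$: here I use the super convention in which $\mathcal{J}_\mani^2\cong Sym^2\mathcal{F}_\mani$ is the line bundle locally spanned by $\theta^1\theta^2$, with transition $\theta^1_\beta\theta^2_\beta=\det M_{\beta\alpha}\,\theta^1_\alpha\theta^2_\alpha$.

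\emph{Cocycle and well-definedness.} The next step is to impose $\varphi_{\gamma\beta}\circ\varphi_{\beta\alpha}=\varphi_{\gamma\alpha}$ on triple overlaps. Expanding the even components, using $\mathcal{J}_\mani^3=0$ to drop higher-order corrections and substituting $\theta^1_\beta\theta^2_\beta=\det M_{\beta\alpha}\,\theta^1_\alpha\theta^2_\alpha$, the $\theta^1\theta^2$-coefficient of $x^i_\gamma$ yields
\[
g^i_{\gamma\alpha} = \partial_k f^i_{\gamma\beta}(f_{\beta\alpha})\,g^k_{\beta\alpha} + g^i_{\gamma\beta}(f_{\beta\alpha})\,\det M_{\beta\alpha},
\]
which is precisely the additive \v{C}ech relation $\eta_{\gamma\alpha}=\eta_{\gamma\beta}+\eta_{\beta\alpha}$ in $\mathcal{T}_{\manir}\otimes Sym^2\mathcal{F}_\mani$, once one recognises the Jacobian $\partial_k f^i_{\gamma\beta}$ as the transition of $\mathcal{T}_{\manir}$ and $\det M_{\beta\alpha}$ as the transition of $Sym^2\mathcal{F}_\mani$. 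An analogous computation shows that replacing the local trivialisations by the split-preserving automorphisms $x^i_\alpha\mapsto x^i_\alpha+h^i_\alpha(x_\alpha)\,\theta^1_\alpha\theta^2_\alpha$, $\theta^a_\alpha\mapsto\theta^a_\alpha$ (the only automorphisms of $\stsheaf|_{U_\alpha}$ fixing $\stsheafred$ and $\mathcal{F}_\mani$, since $\mathcal{J}_\mani^3=0$) modifies $\eta_{\beta\alpha}$ by the \v{C}ech coboundary of $\{h^i_\alpha\partial_{x^i_\alpha}\otimes\theta^1_\alpha\theta^2_\alpha\}$. Hence $\omega_\mani:=[\eta_{\beta\alpha}]$ is a well-defined class in $H^1(\manir,\mathcal{T}_{\manir}\otimes Sym^2\mathcal{F}_\mani)$, and $\omega_\mani=0$ is equivalent to being able to eliminate the $g^i_{\beta\alpha}$ simultaneously on a refinement of the cover, i.e.\ to an isomorphism $\mani\cong\mbox{Gr}\,\mani$. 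Combined with the Remark, this yields the theorem.

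\emph{Main obstacle.} The most delicate step is the bookkeeping that turns the triple-overlap identity into an honest \v{C}ech cocycle in the correct sheaf: one has to verify that $\theta^1\theta^2$ transforms precisely by $\det M_{\beta\alpha}$ (so $Sym^2\mathcal{F}_\mani$ appears with the right twist in the super convention), that the Jacobians of the $f_{\beta\alpha}$ produce exactly the transition of $\mathcal{T}_{\manir}$ after chain-rule conversion, and that the signs coming from reordering the odd generators do not spoil additivity. Once these identifications are pinned down, the rest of the argument is a formal translation of a Čech vanishing statement into a geometric splitting statement.
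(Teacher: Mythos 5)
Your proposal is correct and follows essentially the same route as the paper's source: the paper states this theorem without proof, citing Green \cite{Green} and Manin \cite{Manin}, and your \v{C}ech construction --- reading the $\theta^1_\alpha\theta^2_\alpha$-term of the even transition functions as a $1$-cocycle valued in $\mathcal{T}_{\manir}\otimes Sym^2\mathcal{F}_\mani$, well-defined up to the coboundaries generated by the shifts $x^i_\alpha\mapsto x^i_\alpha+h^i_\alpha\,\theta^1_\alpha\theta^2_\alpha$ --- is exactly the standard argument behind that citation. It is also precisely the description the paper itself relies on later, in the proof of Theorem \ref{gentransition} (the decomposition (\ref{eq:ztransf}) and the cocycle (\ref{omegas})), so there is nothing to flag.
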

\noindent The theorem above offers a simple way to detect when a complex supermanifold having odd dimension 2 fails to be projected by means of a cohomological invariant that can be computed by ordinary algebraic geometric methods. 
The knowledge of $\omega_\mani $ for a supermanifold $\mani$ of dimension $n|2$ is a fundamental ingredient in the characterisation of the given supermanifold. 
\begin{theorem}[Supermanifolds of dimension $n|2$] \label{dim2} Let $\mani$ be a (complex) supermanifold of dimension $n|2$. Then $\mani$ is defined up to isomorphism by the triple 
$(\manir, \mathcal{F}_\mani, \omega_\mani)$ where
$\mathcal{F}_\mani$ is a rank $0|2$ sheaf of locally-free $\mathcal{O}_{\manir}$-modules, the fermionic sheaf, and $\omega_\mani \in H^1 (\manir, \mathcal{T}_{\manir} \otimes Sym^2 \mathcal{F}_\mani).$
\end{theorem}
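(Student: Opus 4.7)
The plan is to describe $\mani$ as a Čech $1$-cocycle deformation of its split model $\mbox{Gr}\,\mani$ on a suitable open cover, with the cocycle taking values in $\mathcal{T}_{\manir}\otimes Sym^2\mathcal{F}_\mani$, and to show that its cohomology class classifies $\mani$ up to isomorphism. The crucial simplification, noted in the Remark, is that $\mathcal{J}^3_\mani = 0$: the nilpotent filtration has length $2$, so all transition data are truncated at quadratic order in the odd coordinates.

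First I would fix an open cover $\{U_\alpha\}$ of $|\mani|$ that simultaneously trivializes $\manir$ and $\mathcal{F}_\mani$, and that is fine enough so each restriction $\mani|_{U_\alpha}$ admits a local split trivialization $\mani|_{U_\alpha}\cong (U_\alpha, \stsheafred|_{U_\alpha}[\theta_\alpha^1,\theta_\alpha^2])$, which is possible since every supermanifold is locally split. On an overlap $U_{\alpha\beta}$ the change of trivializations must have the form
\[
z_\beta^i = f_{\alpha\beta}^i(z_\alpha) + g_{\alpha\beta}^i(z_\alpha)\,\theta_\alpha^1\theta_\alpha^2, \qquad \theta_\beta^a = M_{\alpha\beta}{}^a{}_b(z_\alpha)\,\theta_\alpha^b,
\]
where $(f_{\alpha\beta})$ and $(M_{\alpha\beta})$ are exactly the transition cocycles of $\manir$ and $\mathcal{F}_\mani$, while the quadratic piece $g_{\alpha\beta}:= g_{\alpha\beta}^i\,\partial/\partial z^i\otimes\theta_\alpha^1\theta_\alpha^2$ is a section of $\mathcal{T}_{\manir}\otimes Sym^2\mathcal{F}_\mani$ over $U_{\alpha\beta}$, via the canonical identification $\mathcal{J}_\mani^2\cong Sym^2\mathcal{F}_\mani$ valid because $\mathrm{rk}\,\mathcal{F}_\mani=2$.

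Composing three transitions on $U_{\alpha\beta\gamma}$ and discarding contributions of order $\geq 3$ in the $\theta$'s (which vanish identically) yields the Čech $1$-cocycle identity for $\{g_{\alpha\beta}\}$; and a different choice of local splittings modifies this cocycle by the coboundary of a $0$-cochain valued in $\mathcal{T}_{\manir}\otimes Sym^2\mathcal{F}_\mani$. Hence the class $\omega_\mani:=[\{g_{\alpha\beta}\}]\in H^1(\manir,\mathcal{T}_{\manir}\otimes Sym^2\mathcal{F}_\mani)$ is a well-defined invariant of $\mani$ and coincides with the obstruction of Theorem 2.1. Conversely, given a triple $(\manir,\mathcal{F}_\mani,\omega_\mani)$, pick a Čech representative of $\omega_\mani$ on a trivializing cover and glue the local split sheaves $\stsheafred|_{U_\alpha}[\theta_\alpha^1,\theta_\alpha^2]$ via the formulae above, using this representative together with the transition data of $\manir$ and $\mathcal{F}_\mani$: the cocycle condition ensures consistency on triple overlaps, producing a supermanifold whose associated triple is precisely the prescribed one.

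Finally, if $\mani$ and $\mani'$ share the triple $(\manir,\mathcal{F}_\mani,\omega_\mani)$, then after refining to a common system of local splittings their cocycles $\{g_{\alpha\beta}\}$ and $\{g'_{\alpha\beta}\}$ differ by the coboundary of a $0$-cochain $\{h_\alpha\}$ with $h_\alpha\in \Gamma(U_\alpha,\mathcal{T}_{\manir}\otimes Sym^2\mathcal{F}_\mani)$, and the local superalgebra automorphisms $z_\alpha^i\mapsto z_\alpha^i + h_\alpha^i\,\theta_\alpha^1\theta_\alpha^2$ assemble into a global isomorphism $\mani\cong\mani'$. The main obstacle is the bookkeeping of the interaction between the $\mathrm{GL}(\mathcal{F}_\mani)$-valued cocycle $\{M_{\alpha\beta}\}$ and the quadratic correction $\{g_{\alpha\beta}\}$: one has to verify that once the local frame $\theta_\alpha^1\theta_\alpha^2$ of $Sym^2\mathcal{F}_\mani$ is absorbed into the transition law, the compatibility on triple overlaps reduces to the plain Čech cocycle condition in the sheaf $\mathcal{T}_{\manir}\otimes Sym^2\mathcal{F}_\mani$ rather than a twisted variant — a point that ultimately rests on $\mathcal{J}_\mani^2$ being a genuine $\stsheafred$-module isomorphic to $Sym^2\mathcal{F}_\mani$.
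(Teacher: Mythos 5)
Your proposal is correct, and it is the standard argument; note, though, that the paper offers no proof of Theorem \ref{dim2} at all --- the result is quoted from the literature (it is Green's classification in the form given by Manin, \cite{Manin}, Chapter 4), and that reference's proof is precisely the \v{C}ech construction you reconstruct. Your key points all match the standard treatment: local splitness, the truncation of all transition data at order two in the odd coordinates because $\mathcal{J}_\mani^3=0$, the identification of the quadratic corrections with a $1$-cocycle valued in $\mathcal{T}_{\manir}\otimes Sym^2\mathcal{F}_\mani$ (well defined up to the coboundaries coming from changes of local splitting), and the converse gluing construction. It is also exactly the formalism the paper itself deploys later, in the proof of Theorem \ref{gentransition}, equation (\ref{eq:ztransf}), where the bosonic transition functions are written as the split ones plus a $Sym^2\mathcal{F}_\mani$-valued derivation $\omega_{ij}$, and where it is invoked that only the class of $\omega_{ij}$ in $H^1(\mathcal{T}_{\proj 2}\otimes Sym^2\mathcal{F}_\mani)$ matters. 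The one subtle point you flag at the end --- that the triple-overlap relation, once the Jacobian of the reduced transition and the factor $\det M_{\alpha\beta}$ are absorbed into the frames of $\mathcal{T}_{\manir}$ and of $Sym^2\mathcal{F}_\mani$ respectively, becomes the plain rather than a twisted cocycle condition --- does check out by direct computation, so it is a verification to write out rather than a gap.
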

\noindent The crucial issue of finding a set of invariants that completely characterises complex supermanifolds having odd dimension greater than 2 (up to isomorphisms) and given reduced complex manifold remains - to the best knowledge of the 
authors - still open and it will be addressed in a follow-up paper.

We now apply Theorem \ref{dim2}  to the case that underlying manifolds are ordinary projective spaces $\proj n$, our aim being to identify the obstructions to projectedness and therefore to single out all the non-projected 
supermanifolds of odd dimension $2$ having $\proj n$ as reduced space.

Since we are working over $\proj n$, if the fermionic sheaf $\mathcal{F}_\mani$ is a locally-free sheaf of $\mathcal{O}_{\proj n}$-module having dimension $0|2$, then it follows that there exists an isomorphism 
$Sym^2 \mathcal{F}_\mani \cong \mathcal{O}_{\proj n} (k)$ for some $k$, since all the invertible sheaves over $\proj n $ are of the form $\mathcal{O}_{\proj n} (k)$ for some $k$ (indeed $\mbox{Pic} (\proj n) \cong \mathbb{Z}$). \\
The basic tool to be exploited here in association with Theorem \ref{dim2} is the (twisted) Euler sequence for the tangent space over $\proj n$, that reads
\bear
\xymatrix{
0 \ar[r] & \mathcal{O}_{\proj n} (k) \ar[r] &  \mathcal{O}_{\proj n} (k+1)^{\oplus n+1}  \ar[r] & \mathcal{T}_{\proj {n}} (k) \ar[r]  & 0.
}  
\eear
We now examine super extensions of projective space $\proj n$ for every $n=1, 2, \ldots$.
\begin{itemize}
\item[$\mathbf{ n = 1 :}$] In the case of $\proj 1$, one has to study whenever $H^1 (\mathcal{T}_{\proj 1} \otimes Sym^2 \mathcal{F}_\mani) = H^1 (\mathcal{T}_{\proj 1} (k))$ is non-zero. This is easily achieved, since remembering that over $\proj 1$ 
one has $\mathcal{T}_{\proj 1} \cong \mathcal{O}_{\proj 1} (2)$, it amounts to find a $k$ such that $H^1 (\mathcal{O}_{\proj 1} (2+k)) \neq 0$. This is realised in the cases $k=-l \leq -4$, and one finds 
$
H^1 (\mathcal{O}_{\proj 1} (2-l)) \cong \mathbb{C}^{l-3}.
$
These cohomology groups have a well-known description: they are the $\mathbb{C}$-vector spaces with bases given by $\left\{\frac{1}{(X_0)^j (X_1)^{l-2-j}}\right\}_{j=1}^{l-3}$, where $X_0$ and $X_1$ are the homogeneous coordinates of $\proj 1$, 
see for example the proof of Theorem 5.1 Chapter III of \cite{Har}. As a result, the non-projected supermanifolds over $\proj 1$ 
are those such that $Sym^2 \mathcal{F}_{\mani} \cong \mathcal{O}_{\proj 1 } (-l)$ with $l\geq 4$. 
\item[$\mathbf{ n = 2 :}$] The case over $\proj 2$ is by far the most interesting, and - surprisingly - it has been forgotten by Manin, as he studies fermionic super-extensions over projective spaces in \cite{Manin}. Since over $\proj 2$ one has 
$H^1 (\mathcal{O}_{\proj 2} (k)) = H^1 (\mathcal{O}_{\proj 2} (k+1) ) = 0$, the long exact sequence in cohomology induced by the Euler short exact sequence splits in two exact sequences. The one we are concerned with reads  
\bear \nonumber
\xymatrix@R=1.5pt{
0 \ar[r] & H^1(\mathcal{T}_{\proj 2}(k)) \ar[r] & H^2(\mathcal{O}_{\proj 2}(k)) \ar[r] & H^2(\mathcal{O}_{\proj 2} (k+1))^{\oplus 3} \ar[r] & H^2 (\mathcal{T}_{\proj 2} (k)) \ar[r] &  0. 
}  
\eear
Now it is convenient to distinguish between three different sub-cases.
\begin{itemize}
\item[$k>-3$:] This is the easiest one, since $H^2 (\mathcal{O}_{\proj 2} (k)) = 0$, which implies that $H^1 (\mathcal{T}_{\proj 2} (k))$ is zero. 
\item[$k=-3$:] In this case we have that $H^2(\mathcal{O}_{\proj 2} (-2))^{\oplus 3} = 0,$ so we get an isomorphism
\bear
H^1(\mathcal{T}_{\proj 2}(-3)) \cong H^2(\mathcal{O}_{\proj 2} (-3)) \cong \set{C},
\eear
and, again, this cohomology group is generated by the cohomology class $[\frac{1}{X_0X_1X_2}]$ induced by the 2-cocycle defined by $\frac{1}{X_0X_1X_2}\in \Gamma(\mathcal{U}_0\cap \mathcal{U}_1\cap \mathcal{U}_2,\mathcal{O}_{\proj 2}(-3)),$ where $\mathcal{U}_i \defeq \{[X_0:X_1:X_2] \in \mathbb{P}^2: X_i \neq 0\}$, so that $\mathcal{U}_0\cap \mathcal{U}_2 \cap \mathcal{U}_2 = \{[X_0:X_1:X_2] \in \mathbb{P}^2: X_0 \neq 0,\, X_1 \neq 0,\, X_2 \neq 0\}.$ 
\item[$k <-3$:] In this case both $H^2 (\mathcal{O}_{\proj 2} (k)) $ and $H^2 (\mathcal{O}_{\proj 2} (k+1)) $ are non-zero. Therefore, this makes it a little bit harder to explicitly evaluate $H^{1} (\mathcal{T}_{\proj 2} (k) ) $ directly. 
However, this can be achieved upon using \emph{Bott formulas} (see for example \cite{Okonek}) that give the dimension of cohomology groups of the (twisted) cotangent bundles of projective spaces. First of all, we observe that using Serre duality one gets
$H^1 (\mathcal{T}_{\proj 2 } (k)) \cong H^1 (\mathcal{T}^\ast_{\proj 2} (-k -3))^\vee$.
In general, Bott formulas guarantee that $H^q (\bigwedge^{p} T^\ast_{\proj n} (k)) = 0$ if $q \neq n $ and $q,k \neq 0$. In our specific case we have $q=1, n = 2, p=1$ and $-k-3 <-6 $, therefore $
H^1 (\mathcal{T}_{\proj 2 } (k)) = 0.$ 
\end{itemize}
The above computation yields that the only non-projected supermanifold having underlying manifold isomorphic to $\proj 2$ will have a fermionic sheaf $\mathcal{F}_\mani$ such that $Sym^2 \mathcal{F}_\mani \cong \mathcal{O}_{\proj 2} (-3).$
\item[$\mathbf{ n > 2 :}$] In this case it is easy to conclude that $H^1 (\mathcal{T}_{\proj n} (k )) = 0$ since in the long exact sequence in cohomology this group sits between $H^1 (\mathcal{O}_{\proj n} (k+1))^{\oplus n+1}$ and 
$H^2 (\mathcal{O}_{\proj n} (k))$ and both of these groups are zero for every $k$ if $n >2$.
\end{itemize}

\noindent The above results allow us to classify the non-projected supermanifolds having $\proj 1$ or $\proj 2$ as reduced space. We start with the $\proj 1$ case, which directly relies on the fact that vector bundles over $\proj 1$ have no continuous 
moduli. Indeed one has the following result (for more details see \cite{NojaPhD}).
\begin{theorem} [Non-projected supermanifolds over $\proj 1$ with odd dimension $2$] Every non-projected supermanifold over $\proj 1$ having odd dimension equal to 2 is characterised up to isomorphism by a triple 
$(\proj 1, \mathcal{F}_\mani, \omega)$ where $\mathcal{F}_\mani$ is a rank $0|2$ sheaf of $\mathcal{O}_{\proj 1}$-modules such that $\mathcal{F}_\mani \cong \Pi \mathcal{O}_{\proj 1} (m) \oplus \Pi \mathcal{O}_{\proj 1} (n)$ with $m+n = -\ell$, 
$\ell\geq4$ and $\omega$ is a non-zero cohomology class $\omega \in H^1 (\mathcal{O}_{\proj 1} (2-\ell)).$
\end{theorem}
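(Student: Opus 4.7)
\emph{Proof proposal.} By Theorem \ref{dim2}, every supermanifold of dimension $1|2$ with reduced space $\proj 1$ is determined up to isomorphism by a triple $(\proj 1, \mathcal{F}_\mani, \omega_\mani)$ where $\mathcal{F}_\mani$ is a rank $0|2$ locally free $\mathcal{O}_{\proj 1}$-module and $\omega_\mani \in H^1(\proj 1, \mathcal{T}_{\proj 1} \otimes Sym^2 \mathcal{F}_\mani)$. The plan is therefore to describe exactly which such triples can occur subject to $\omega_\mani \neq 0$.

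The first step is to invoke Grothendieck's splitting theorem for vector bundles on $\proj 1$: applied to the underlying ordinary rank $2$ sheaf, it yields $\mathcal{F}_\mani \cong \Pi \mathcal{O}_{\proj 1}(m) \oplus \Pi \mathcal{O}_{\proj 1}(n)$ for some integers $m, n$. A direct computation of the super symmetric square of a purely odd rank $2$ sheaf then gives $Sym^2 \mathcal{F}_\mani \cong \mathcal{O}_{\proj 1}(m+n)$, since in the super setting the symmetric square of a purely odd bundle coincides with the ordinary exterior square, and $\Lambda^2(L_1 \oplus L_2) \cong L_1 \otimes L_2$ for line bundles $L_i$.

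Combining this with $\mathcal{T}_{\proj 1} \cong \mathcal{O}_{\proj 1}(2)$ gives $\mathcal{T}_{\proj 1} \otimes Sym^2 \mathcal{F}_\mani \cong \mathcal{O}_{\proj 1}(2+m+n)$. The computation already carried out earlier in this section shows that $H^1(\proj 1, \mathcal{O}_{\proj 1}(2+m+n))$ is non-zero precisely when $m+n \leq -4$. Writing $m+n = -\ell$, non-projectedness therefore forces $\ell \geq 4$, and the obstruction $\omega_\mani$ must be a non-zero element of $H^1(\mathcal{O}_{\proj 1}(2-\ell))$. Conversely, any triple satisfying these constraints produces a non-projected supermanifold by the converse half of Theorem \ref{dim2}.

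The only delicate point is the step where one computes $Sym^2$ of the purely odd $\mathcal{F}_\mani$: one must carefully track how parity reversal interchanges super-symmetrization and ordinary antisymmetrization, so that the \emph{a priori} rank three sheaf $Sym^2(\Pi L_1 \oplus \Pi L_2)$ actually collapses to the single line bundle $L_1 \otimes L_2$. Once this sign bookkeeping is in place, everything reduces to the cohomological analysis of $\mathcal{O}_{\proj 1}(k)$ already recorded, and the characterisation follows.
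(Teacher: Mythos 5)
Your proposal is correct and takes essentially the route the paper intends: the paper omits the detailed proof (deferring to Manin, Ch.~4, \S 2.10, and \cite{NojaPhD}), but the ingredients it assembles beforehand — Theorem \ref{dim2}, the computation that $H^1(\mathcal{T}_{\proj 1}(k)) = H^1(\mathcal{O}_{\proj 1}(2+k)) \neq 0$ exactly for $k \leq -4$, and the absence of continuous moduli for bundles on $\proj 1$ (Grothendieck splitting) — are precisely the steps you carry out. Your key identification $Sym^2(\Pi E) \cong \bigwedge^2 E$, so that $Sym^2 \mathcal{F}_\mani \cong \mathcal{O}_{\proj 1}(m+n)$, is also the paper's own convention (stated later in the proof of the embedding theorem as $Sym^{i}\mathcal{F}_\mani = \Pi^{i}\bigwedge^i E$), so nothing is missing.
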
 
\noindent We omit the proof of the result above, for which a reference is \cite{Manin} Chapter 4, \S 2.10, or \cite{NojaPhD} for a more detailed exposition.
\vskip2mm
\noindent The situation is rather more complicated over $\proj 2$, since locally-free sheaves of $\mathcal{O}_{\proj 2}$-modules do not in general split as direct sums of invertible sheaves, and they might have a moduli space. In order to provide a 
classification of all non-projected supermanifolds over $\proj 2$, among the possible choices for the fermionic bundle $\mathcal{F}_{\mani}$, one has to account for all the vector bundles 
$ \mathcal{F}_{\mani}$ whose second symmetric power\footnote{in the supersymmetric sense: remember $\mathcal{F}_{\mani}$ is seen as a rank $0|2$ locally-free sheaf of $\mathcal{O}_{\proj 2}$-modules, so that $Sym^2 (\mathcal{F}_{\mani})$ is a 
rank $1|0$ locally-free sheaf of $\mathcal{O}_{\proj 2}$-modules} is isomorphic to the canonical sheaf $\mathcal{O}_{\proj 2} (-3)$ of $\proj 2$. This fixes the first Chern class of $\mathcal{F}_{\mani}$, but it is yet not enough to uniquely fix a moduli 
space for these vector bundles, as we would need to fix their second Chern class as well. 

\section{Non-Projected Supermanifolds over $\proj 2$}

\noindent We have seen in the previous section that in fermionic dimension equal to $2$ the only way a supermanifold over $\proj 2$ can be non-projected is whenever its fermionic sheaf is such that $Sym^2 \mathcal{F}_\mani \cong \mathcal{O}_{\proj 2} (-3).$
\noindent To simplify notations, we give the following definition.
\begin{definition}[The Supermanifolds $\proj 2_\omega (\mathcal{F}_\mani)$] We  will denote with  $\proj 2_\omega (\mathcal{F}_\mani)$ any supermanifold represented by the triple $(\proj {2}, \mathcal{F}_\mani, \omega_\mani)$ where $\mathcal{F}_\mani$ is a locally-free sheaf of 
$\mathcal{O}_{\proj 2}$-modules of rank $0|2$ such that $Sym^2 \mathcal{F}_\mani \cong \mathcal{O}_{\proj 2} (-3)$ and such that $\omega_\mani$ is a cohomology class in $ H^1 (\mathcal{T}_{\proj 2} (-3)) \cong \mathbb{C}.$  
\end{definition} 

\noindent Working over $\proj 2$ leads to consider a set of homogeneous coordinates $[X_0 : X_1 : X_2]$ on $\proj 2$ and in turn the set of the affine coordinates and their algebras over the three open sets of the covering 
$\mathcal{U}\defeq \{ \mathcal{U}_0, \mathcal{U}_1, \mathcal{U}_2\}$ of $\proj 2$, where $\mathcal{U}_i \defeq \{ [X_0: X_1:X_2] \in \mathbb{P}^2 : X_i \neq 0\}.$ In particular, we will have the following  
\begin{align} \label{TransMod1}
& \mathcal{U}_0 \defeq \left \{ X_0 \neq 0 \right \} \quad \rightsquigarrow \quad z_{10} \,\mbox{mod}\,{\mathcal{J}^2_{\mathpzc{M}}}\defeq \frac{X_1}{X_0}, \qquad  z_{20} \,\mbox{mod}\,{\mathcal{J}^2_{\mathpzc{M}}} \defeq \frac{X_2}{X_0};   \nonumber \\
& \mathcal{U}_1 \defeq \left \{ X_1 \neq 0 \right \} \quad \rightsquigarrow \quad z_{11}\,\mbox{mod}\,{\mathcal{J}^2_{\mathpzc{M}}} \defeq \frac{X_0}{X_1}, \qquad z_{21} \,\mbox{mod}\,{\mathcal{J}^2_{\mathpzc{M}}}\defeq \frac{X_2}{X_1};   \nonumber \\  
& \mathcal{U}_2 \defeq \left \{ X_2 \neq 0 \right \} \quad \rightsquigarrow \quad z_{12} \,\mbox{mod}\,{\mathcal{J}^2_{\mathpzc{M}}} \defeq \frac{X_0}{X_2}, \qquad z_{22} \,\mbox{mod}\,{\mathcal{J}^2_{\mathpzc{M}}} \defeq \frac{X_1}{X_2}.  
\end{align}
The transition functions between these charts therefore look like
\begin{align}\label{transfzeta2}
\mathcal{U}_{0} \cap \mathcal{U}_{1} : \qquad & z_{10} \,\mbox{mod}\,{\mathcal{J}^2_{\mathpzc{M}}}= \frac{1}{z_{11}}\,\mbox{mod}\,{\mathcal{J}^2_{\mathpzc{M}}},  \quad & z_{20} &\,\mbox{mod}\,{\mathcal{J}^2_{\mathpzc{M}}}= \frac{z_{21} }{z_{11} } \,
\mbox{mod}\,{\mathcal{J}^2_{\mathpzc{M}}}; \nonumber   \\
\mathcal{U}_{0} \cap \mathcal{U}_{2} : \qquad & z_{10} \,\mbox{mod}\,{\mathcal{J}^2_{\mathpzc{M}}} = \frac{z_{22}}{z_{12}} \,\mbox{mod}\,{\mathcal{J}^2_{\mathpzc{M}}}, \quad & z_{20} & \,\mbox{mod}\,{\mathcal{J}^2_{\mathpzc{M}}}= \frac{1}{z_{12}}\,
\mbox{mod}\,{\mathcal{J}^2_{\mathpzc{M}}}; \nonumber \\
\mathcal{U}_{1} \cap \mathcal{U}_{2} : \qquad & z_{11} \,\mbox{mod}\,{\mathcal{J}^2_{\mathpzc{M}}} = \frac{z_{12}}{z_{22}} \,\mbox{mod}\,{\mathcal{J}^2_{\mathpzc{M}}}, & \quad z_{21} & \,\mbox{mod}\,{\mathcal{J}^2_{\mathpzc{M}}} = \frac{1}{z_{22}} \,
\mbox{mod}\,{\mathcal{J}^2_{\mathpzc{M}}}.
\end{align}
The reason why we give expressions for the local bosonic coordinates $z_{ij}$ and their transformation functions mod $\mathcal{J}^2_{\mani}$ instead of  mod $\mathcal{J}_{\mani}$ is that, as the fermionic dimension is equal to $2$, one has $(\mathcal{J}_{\mani})_0=\mathcal{J}^2_{\mani}$.

Moreover we will denote $\theta_{1i},\ \theta_{2i}$ a basis of the rank $0|2$ locally-free sheaf $\mathcal{F}_\mani$ on any of the open sets $\mathcal{U}_i$, for $i=0,1,2$, and, since $\mathcal{J}_{\mani}^3=0$, the transition functions among these bases will have the form
\begin{align}\label{transtheta}
\mathcal{U}_{i} \cap \mathcal{U}_{j} : \qquad &  \left(\begin{array}{l} \theta_{1i}\\ \theta_{2i}\end{array}\right)=M\cdot \left(\begin{array}{l} \theta_{1j}\\ \theta_{2j}\end{array}\right),
\end{align}
with $M$ a $2\times 2$ matrix with coefficients in $\mathcal{O}_{\proj 2}(\mathcal{U}_i\cap \mathcal{U}_j)$. Note that in the transformation (\ref{transtheta}) one can write $M$ as a matrix with coefficients given by some even rational functions of 
$z_{1j},z_{2j}$, because of the definitions (\ref{TransMod1}) and the facts that $\theta_{hj}\in \mathcal{J}_\mani$ and $\mathcal{J}^3_\mani=0$.

Finally we note the transformation law for the products $\theta_{1i}\theta_{2i}=(\det M)\theta_{1j}\theta_{2j}$, which, since $\det M$ is a transition function for the line bundle $Sym^2 \mathcal{F}_\mani \cong \mathcal{O}_{\proj 2} (-3)$ over 
$\mathcal{U}_{i} \cap \mathcal{U}_{j}$, can be written, up to constant changes of bases in $\mathcal{F}|_{\mathcal{U}_{i}}$ and $\mathcal{F}|_{\mathcal{U}_{j}}$, in the more precise form 
\begin{equation}\label{transtheta2}
\theta_{1i}\theta_{2i}=\left(\frac{X_j}{X_i}\right)^3\theta_{1j}\theta_{2j}.
\end{equation}
This also means that we can identify the base $\theta_{1i}\theta_{2i}$ of $Sym^2 \mathcal{F}_\mani|_{\mathcal{U}_i}$ with the standard base $\frac{1}{X_i^3}$ of $\mathcal{O}_{\proj 2} (-3)$ over $\mathcal{U}_{i}$.

The relations and transition functions given above are those that \emph{all} the supermanifolds of the kind $\proj 2_\omega (\mathcal{F}_\mani)$ share, regardless of the specific form of its fermionic sheaf $\mathcal{F}_\mani$. 

\begin{theorem}[Non-Projected Supermanifolds over $\proj 2$]\label{gentransition} Let $\mani$ be a supermanifold over $\proj 2$ having odd dimension equal to 2. Then $\mani$ is non-projected if and only if it arises from a triple 
$(\proj 2, \mathcal{F}_\mani, \omega)$ where $\mathcal{F}_\mani$ is a rank $0|2$ locally free sheaf of $\mathcal{O}_{\proj 2}$-modules such that $Sym^2 \mathcal{F}_{\mani} \cong \mathcal{O}_{\proj 2} (-3)$ and $\omega$ is a 
non-zero cohomology class $\omega \in H^1 (\mathcal{O}_{\proj 2} (-3)).$ 
One can write the transition functions for an element of the family $\proj 2_\omega (\mathcal{F}_\mani)$ from coordinates on $\mathcal{U}_0$ to coordinates on $\mathcal{U}_1$ as follows
\bear \label{eq:eventranf}
\left ( 
\begin{array}{c}
z_{10} \\
z_{20} \\
\theta_{10} \\
\theta_{20} 
\end{array}
\right ) = \left ( \begin{array}{c@{}}  
 \frac{1}{z_{11}} \\
 \frac{z_{21}}{z_{11}} + \lambda \frac{\theta_{11} \theta_{21}}{(z_{11})^2} \\
M  \left (
\begin{array}{c}
\theta_{11} \\
\theta_{21} 
\end{array}
\right ) 
\end{array}
\right ) = \Phi \left (z_{11}, z_{21}, \theta_{11}, \theta_{21}  \right )
\eear
where $\lambda \in \mathbb{C}$ is a representative of the class $\omega \in H^1 (\mathcal{T}_{\proj 2} (-3)) \cong \mathbb{C}$ and $M$ is a $2\times 2$ matrix with coefficients in $\mathbb{C}[z_{11}, z_{11}^{-1},z_{21}]$ such that $\det M=1/z_{11}^3$. 
Similar transformations hold between the other pairs of open sets.
\end{theorem}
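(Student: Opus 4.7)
The \emph{if and only if} clause is essentially already done: by Theorem \ref{dim2} any dimension $2|2$ supermanifold with reduced space $\proj 2$ is determined up to isomorphism by its triple $(\proj 2, \mathcal{F}_\mani, \omega_\mani)$, and the case analysis of Section 2 shows that the obstruction group $H^1(\mathcal{T}_{\proj 2} \otimes Sym^2 \mathcal{F}_\mani)$ is non-zero \emph{only} when $Sym^2 \mathcal{F}_\mani \cong \mathcal{O}_{\proj 2}(-3)$, in which case it is one-dimensional and generated, through the Euler connecting map, by the class $[1/(X_0 X_1 X_2)]$ of $H^2(\mathcal{O}_{\proj 2}(-3))$.

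The substance of the theorem is the explicit form \eqref{eq:eventranf} of the transition functions. I would write the most general even coordinate change on $\mathcal{U}_0 \cap \mathcal{U}_1$ compatible with the filtration as $z_{i0} = z_{i0}^{\mathrm{red}}(z_{11}, z_{21}) + \epsilon_i$, with $z_{i0}^{\mathrm{red}}$ the standard transitions from \eqref{transfzeta2} and $\epsilon_i \in \mathcal{J}^2_\mani$. Since $\mathcal{J}^3_\mani = 0$, each $\epsilon_i$ is an $\mathcal{O}_{\proj 2}$-multiple of $\theta_{11}\theta_{21}$, and the pair $(\epsilon_1, \epsilon_2)$ assembles into a \v{C}ech 1-cocycle with values in $\mathcal{T}_{\proj 2} \otimes Sym^2 \mathcal{F}_\mani = \mathcal{T}_{\proj 2}(-3)$, via the identification $\theta_{1i}\theta_{2i} \leftrightarrow 1/X_i^3$ fixed in \eqref{transtheta2}. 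By Theorem \ref{dim2} the cohomology class of this cocycle is precisely $\omega$, so what remains is to compute a specific representative.

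I would pull the generator $\lambda/(X_0 X_1 X_2)$ of $H^2(\mathcal{O}_{\proj 2}(-3))$ back through the Euler connecting map by finding a \v{C}ech 1-cochain $(a_{ij})$ in $\mathcal{O}_{\proj 2}(-2)^{\oplus 3}$ whose coboundary, composed with the inclusion $f \mapsto (X_0 f, X_1 f, X_2 f)$, equals $\lambda/(X_0 X_1 X_2)$; a convenient choice is $a_{01} = \lambda(0, 0, (X_0 X_1)^{-1})$, $a_{02} = -\lambda(0, (X_0 X_2)^{-1}, 0)$, $a_{12} = \lambda((X_1 X_2)^{-1}, 0, 0)$. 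Projecting $a_{01}$ to $\mathcal{T}_{\proj 2}(-3)$ yields the section $\lambda (X_0 X_1)^{-1}\partial_{X_2}$; rewriting this in the affine coordinates of $\mathcal{U}_1$, where $\partial_{X_2}$ matches $\partial_{z_{21}}$ and $(X_0 X_1)^{-1} = 1/z_{11}$, and then converting to the $U_0$-frame via $\partial_{z_{21}} = z_{11}^{-1}\partial_{z_{20}}$, produces $\epsilon_1 = 0$ and $\epsilon_2 = \lambda \theta_{11}\theta_{21}/z_{11}^2$, matching \eqref{eq:eventranf}. The transitions on the remaining intersections follow analogously from $a_{02}$ and $a_{12}$, while the shape of $M$ and the constraint $\det M = 1/z_{11}^3$ come directly from \eqref{transtheta}, \eqref{transtheta2}, and the fact that $1/z_{11}^3$ is the transition function of $Sym^2 \mathcal{F}_\mani \cong \mathcal{O}_{\proj 2}(-3)$ on $\mathcal{U}_0 \cap \mathcal{U}_1$.

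The main obstacle I anticipate is bookkeeping: aligning the homogeneous Euler representative with the affine conventions of \eqref{TransMod1}--\eqref{transtheta2}, and keeping in mind that any alternative admissible cocycle would differ from the one above by a \v{C}ech coboundary, hence by a change of local coordinates that yields an isomorphic supermanifold, consistent with the \emph{up to isomorphism} scope of Theorem \ref{dim2}.
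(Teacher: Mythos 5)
Your proposal is correct and takes essentially the same approach as the paper: the \emph{if and only if} part rests on Theorem \ref{dim2} together with the cohomological analysis of Section 2, and the explicit representative is obtained by making the Euler connecting homomorphism explicit. In fact your lifting cochain $(a_{ij})$ is precisely the paper's decomposition of $e\left(\tfrac{\lambda}{X_0X_1X_2}\right)$ into terms regular on the double intersections, whose images under $\pi_\ast$ give the same cocycle $\omega_{01}=\lambda\,\theta_{11}\theta_{21}\,z_{11}^{-2}\,\partial_{z_{20}}$, etc., as in (\ref{omegas}).
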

\begin{proof} The part of the transformation law  (\ref{eq:eventranf}) that relates the fermionic coordinates $\theta_{10},\theta_{20}$ and $\theta_{11},\theta_{21}$ has already been discussed above. We are therefore left to explain the part of the transformation  
(\ref{eq:eventranf}) that relates the bosonic coordinates $z_{10},z_{20}$ and  $z_{11},z_{21}$. Writing the general transformation
\begin{equation}\label{eq:ztransf}
z_{\ell i} (\underline z_j , \underline \theta_j) = z_{\ell i} (\underline z_j) + \omega_{ij} (\underline z_j , \underline \theta_j)(z_{\ell i}) \qquad \ell = 1, \ldots, n,
\end{equation}
in this particular case, yields the following
\begin{eqnarray*}
z_{10}& = &\frac{1}{z_{11}} + \omega(z_{10})\\
z_{20} &=& \frac{z_{21}}{z_{11}} + \omega(z_{20}),
\end{eqnarray*}
with $\omega$ a derivation of $\mathcal{O}_{\proj 2}$ with values in $Sym^2\mathcal{F}_\mani$, which identifies an element  $\omega_{\mani}\in H^1 (\mathcal{T}_{\proj 2} \otimes Sym^2 \mathcal{F}_{\mani})$. Recall that by Theorem \ref{dim2} it is only 
the cohomology class $\omega_\mani$ that matters in defining the structure of the supermanifold $\mani$. In particular $\mani$ is non-projected if and only if  $\omega\in H^1 (\mathcal{T}_{\proj 2} \otimes Sym^2 \mathcal{F}_{\mani_{\, \proj 2}})$ is 
non-zero. The only possibility for this space to be non-zero is $Sym^2 \mathcal{F}_{\mani_{\, \proj 2}} \cong \mathcal{O}_{\proj 2} (-3)$, so that $\omega$ lies in $H^1 (\mathcal{T}_{\proj 2} (-3))$. Indeed this space is non-null as can be seen by the 
(twisted) Euler exact sequence for the tangent space, which reads
\bear \label{twisteul}
\xymatrix{
0 \ar[r] & \mathcal{O}_{\proj 2} (-3) \ar[r] &  \mathcal{O}_{\proj 2} (-2)^{\oplus 3}  \ar[r] & \mathcal{T}_{\proj {2}} (-3) \ar[r]  & 0.
}  
\eear
The long exact sequence in cohomology yields the following isomorphism:
\bear
\delta : H^1 (\mathcal{T}_{\proj 2} \otimes \mathcal{O}_{\proj 2} (-3)) \stackrel{\cong}\longrightarrow H^2 (\mathcal{O}_{\proj 2 }(-3) )\cong \mathbb{C}
\eear
where $\delta$ is the connecting homomorphism.  We now will make this isomorphism more explicit. Recall that the untwisted Euler sequence
is 
\bear
\xymatrix{
0 \ar[r] &  \mathcal{O}_{\proj 2}  \ar[r]^{e} & \mathcal{O}_{\proj 2} (1)^{\oplus 3}  \ar[r]^{\pi_\ast} & \mathcal{T}_{\proj {2}} (-3) \ar[r] & 0
}
\eear
where, if we  write formally $\mathcal{O}_{\proj 2} (1)^{\oplus 3}=\mathcal{O}_{\proj 2} (1)\partial_{X_0}\oplus \mathcal{O}_{\proj 2} (1)\partial_{X_1}\oplus \mathcal{O}_{\proj 2} (1)\partial_{X_2}$, we have 
\begin{align}
& e(f)=f(X_0\partial_{X_0}+X_1\partial_{X_1}+X_2\partial_{X_2}) & \\ 
& \pi_\ast(X_i\partial_{X_{j}}) =  \partial_{(X_j/X_i)}.&
\end{align}
The last relation takes place over the open set $\mathcal{U}_i$, with affine coordinates $X_j/X_i$, for $j \neq i$.
This holds because, fibrewise, the Euler sequence is provided by the differentials $\pi_\ast:T_{(\mathbb{C}^3)^\ast,v}\to T_{\proj 2, [v]}$ of the canonical projection $\pi: (\mathbb{C}^3)^\ast\to \proj 2$. In particular, over $\mathcal{U}_0$ we have the 
local splitting of $\mathcal{O}_\mani$ given by identifying $z_{10}=X_1/X_0$, $z_{20}=X_2/X_0$ and fermionic coordinates given by the chosen local base $\theta_{10},\theta_{20}$ of $\mathcal{F}$, and we get $\partial_{z_{20}}=\pi_\ast(X_0\partial_{X_2})$. 
By similar reasons we can write $\partial_{z_{11}}=\pi_\ast(X_1\partial_{X_0})$ over $\mathcal{U}_1$ and $\partial_{z_{22}}=\pi_\ast(X_2\partial_{X_1})$ over $\mathcal{U}_2$.
Now consider the local section $\frac{1}{X_0X_1X_2} \in \mathcal{O}_{\proj 2}(-3)(\mathcal{U}_0\cap \mathcal{U}_1\cap \mathcal{U}_2)$, whose class $[\frac{1}{X_0X_1X_2}]$ is a basis of $ H^2 (\mathcal{O}_{\proj 2}(-3))$. We make the following calculation 
on local sections over
$\mathcal{U}_0\cap \mathcal{U}_1\cap \mathcal{U}_2$   of the sequence (\ref{twisteul})
\begin{eqnarray*} e\left(\frac{1}{X_0X_1X_2}\right)
 &=& \theta_{10}\theta_{20}\left( \frac{X_0}{X_1}\right)X_0\partial_{X_2}+\theta_{11}\theta_{21}\left( \frac{X_1}{X_2}\right)X_1\partial_{X_0}+
 \theta_{12}\theta_{22}\left( \frac{X_2}{X_0}\right)X_2\partial_{X_1}.
\end{eqnarray*}
By applying $\pi_\ast$ to the last expression above we obtain 
\begin{eqnarray*}
0 &=& \frac{\theta_{10}\theta_{20}}{z_{10}}\partial_{z_{20}}+\frac{\theta_{11}\theta_{21}}{z_{21}}\partial_{z_{11}}+\frac{\theta_{12}\theta_{22}}{z_{12}}\partial_{z_{22}}\\
&=& \frac{\theta_{11}\theta_{21}}{z_{11}^2} \partial_{z_{20}}+\frac{\theta_{12}\theta_{22}}{z_{22}^2}\partial_{z_{11}}+\frac{\theta_{10}\theta_{20}}{z_{20}^2}\partial_{z_{22}}
\end{eqnarray*}
where, for the last equality, we have used the transformations (\ref{transfzeta2}) and (\ref{transtheta2}).
The final result is that the assignments of local sections of $\mathcal{T}_{\proj 2} \otimes Sym^2 \mathcal{F}$
\begin{eqnarray}\label{omegas}
\omega_{01}=\frac{\theta_{11}\theta_{21}}{z_{11}^2} \partial_{z_{20}} & \rm{on}  & \mathcal{U}_0\cap\mathcal{U}_1,\nonumber \\
\omega_{12}=\frac{\theta_{12}\theta_{22}}{z_{22}^2}\partial_{z_{11}}  & \rm{on}  & \mathcal{U}_1\cap\mathcal{U}_2,\nonumber \\
\omega_{20}=\frac{\theta_{10}\theta_{20}}{z_{20}^2}\partial_{z_{22}} & \rm{on}  & \mathcal{U}_0\cap\mathcal{U}_0 
\end{eqnarray}
satisfy the cocycle condition 
$$ \omega_{01}|_{\mathcal{U}_0\cap \mathcal{U}_1\cap \mathcal{U}_2}+\omega_{12}|_{\mathcal{U}_0\cap \mathcal{U}_1\cap \mathcal{U}_2}+\omega_{20}|_{\mathcal{U}_0\cap \mathcal{U}_1\cap \mathcal{U}_2}=0$$
and therefore define a cohomology class $[\omega]\in H^1(\mathcal{T}_{\proj 2} \otimes Sym^2 \mathcal{F}_{\mani})$.  Moreover, by definition of  the connecting homomorphism $\delta$, one has $$\delta([\omega])=\left[\frac{1}{X_0X_1X_2}\right]\in 
H^2(\mathcal{O}_{\proj 2}(-3)).$$ In particular, from the class $[\lambda \omega]\in H^1(\mathcal{T}_{\proj 2} \otimes Sym^2 \mathcal{F}_{\mani})$, one obtains the claimed transformation
\begin{eqnarray*}
z_{10}& = &\frac{1}{z_{11}} + \lambda\omega_{01}(z_{10})=\frac{1}{z_{11}},\\
z_{20} &=& \frac{z_{21}}{z_{11}} + \lambda\omega_{01}(z_{20})= \frac{z_{21}}{z_{11}}+\lambda \frac{\theta_{11}\theta_{21}}{z_{11}^2}.
\end{eqnarray*}
\end{proof}
\begin{remark} Before we go on, we recall that, as observed in \cite{Manin}, the class of isomorphism of the supermanifold does not depend on the parameter $\lambda$, since an isomorphism between the supermanifold corresponding to $\lambda =1$ and the supermanifold corresponding to an arbitrary $\lambda \neq 0$ is locally defined by $z^\prime_{ij} = z_{ij}$, $\theta^\prime_{1j} = \lambda \theta_{1j}$ and $\theta^\prime_{2j} = \theta_{2j}.$
\end{remark}

\section{$\proj 2_\omega (\mathcal{F}_\mani)$ is a Calabi-Yau supermanifold}

There is an important remark to be done here: when dealing with a non-projected supermanifold, no exact sequence comes to our help to compute the (super) Chern class of the supermanifold involved. Actually, one needs to carry out explicit 
computations, investigating the Berezinian of the change of coordinates of the cotangent sheaf among charts, defining the Berezinian sheaf of the supermanifold (we refer to the literature for the notion of Berezinian in super linear algebra and the related 
construction of the Berezinian sheaf of a supermanifold, see in particular \cite{Manin}, \cite{NCR}, \cite{Witten}). There is, however, a distinct class of supermanifolds such that their Berezinian is trivial in the sense specified by the following definition.
\begin{definition}[Calabi-Yau Supermanifold] We say that a supermanifold $\mani$ is a Calabi-Yau supermanifold if its Berezinian sheaf is trivial. In other words, $\mani$ is a Calabi-Yau supermanifold if $Ber_{\mani} \cong \mathcal{O}_{\mani}.$
\end{definition}
Calabi-Yau supermanifolds are indeed super analogs of the ordinary Calabi-Yau manifolds, a privileged class of varieties having trivial canonical sheaf. Indeed, we recall that the de Rham complex attached to a supermanifold is \emph{not bounded from above} - 
there is no top-form on a supermanifold! - and therefore it is in general critical to generalize the notion of integration over supermanifolds. Actually, the Berezinian sheaf is the only meaningful supersymmetric analog of the canonical sheaf, in that its sections 
transform as densities and they are the objects to call for when one looks for a measure for integration involving nilpotent bits - the so-called \emph{Berezin integral}. Calabi-Yau supermanifolds enter many constructions in theoretical physics (see, in 
particular \cite{Sethi}, \cite{VafaAga}, \cite{WittenCP34}, \cite{NeitzkeVafa}), but they have never really undergone a deep mathematical investigation, which we now begin here. We will indeed prove in the next theorem that \emph{all} of the supermanifolds over $\proj 2$ of type $\proj2_\omega (\mathcal{F}_\mani)$ are Calabi-Yau supermanifolds.
\begin{theorem} All of the supermanifolds of the kind $\proj {2}_\omega (\mathcal{F}_\mani)$ are Calabi-Yau supermanifolds regardless of the choice of $\mathcal{F}_\mani$ and $\omega$. That is
$
Ber_{\proj 2_{\omega} (\mathcal{F}_\mani)} \cong \mathcal{O}_{\proj 2_\omega (\mathcal{F}_\mani)}.
$
\end{theorem}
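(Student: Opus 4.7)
The plan is to compute directly the transition cocycle of the Berezinian associated to the change of coordinates from Theorem \ref{gentransition}, and show that it is trivial. Concretely, the strategy is to prove that the $\omega$-dependent nilpotent corrections to the Berezinian transition functions cancel, so the cocycle reduces to the one of the split associated supermanifold $\mbox{Gr}\,\mani$, whose Berezinian equals $K_{\proj 2}\otimes (\det \mathcal{F}_\mani)^{-1}\cong \mathcal{O}_{\proj 2}$ by the hypothesis $Sym^2\mathcal{F}_\mani\cong K_{\proj 2}$.

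I arrange the Jacobian of $\Phi: \mathcal{U}_1\to \mathcal{U}_0$ in block form $J=\left(\begin{smallmatrix} A & B \\ C & D\end{smallmatrix}\right)$ and apply $Ber(J)=\det(A)\cdot\det(D-CA^{-1}B)^{-1}$. Two features of (\ref{eq:eventranf}) make the computation tractable. First, the $\omega$-correction appears only in $z_{20}$, so $A_{12}=0$: $A$ is lower triangular and $\det A=-1/z_{11}^{3}$, independently of the nilpotent entry in $A_{21}$. Second, $B$ has vanishing first row and $(A^{-1})_{12}=0$, so $A^{-1}B$ has vanishing first row; a short calculation using $\theta_{11}^{2}=\theta_{21}^{2}=0$ then yields
\[ CA^{-1}B \;=\; \frac{\lambda\,\theta_{11}\theta_{21}}{z_{11}}\,\frac{\partial M}{\partial z_{21}}. \]
Because this is quadratic in the odd coordinates, its square vanishes and $\det(I-M^{-1}CA^{-1}B)=1-\mathrm{tr}(M^{-1}CA^{-1}B)$. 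Invoking the Jacobi identity $\mathrm{tr}(M^{-1}\partial_{z}M)=\partial_{z}\log\det M$, the correction is proportional to $\partial_{z_{21}}\log\det M$. This is the decisive point: by (\ref{transtheta2}), the condition $Sym^{2}\mathcal{F}_\mani\cong \mathcal{O}_{\proj 2}(-3)$ forces $\det M=1/z_{11}^{3}$, which is independent of $z_{21}$, so the correction vanishes. Consequently $Ber(J)|_{\mathcal{U}_0\cap \mathcal{U}_1}=\det A/\det M=-1$.

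The analogous computation on the other two pairwise intersections yields the same Berezinian cocycle as for the split supermanifold $\mbox{Gr}\,\mani$, since the cancellation mechanism applies identically: the $\omega$-correction always appears on a single bosonic coordinate and $\det M$ is always a monomial in the other coordinate on that chart. The resulting constant cocycle is trivial, as it is the cocycle of $K_{\proj 2}\otimes(\det\mathcal{F}_\mani)^{-1}\cong \mathcal{O}_{\proj 2}$; hence $Ber_\mani \cong \mathcal{O}_\mani$. The main obstacle I expect is the bookkeeping required to recognize $CA^{-1}B$ in the compact form $(\lambda\,\theta_{11}\theta_{21}/z_{11})\,\partial M/\partial z_{21}$, after which the Jacobi identity together with the Calabi-Yau-like condition $Sym^{2}\mathcal{F}_\mani\cong K_{\proj 2}$ defining the family makes the whole nilpotent contribution disappear.
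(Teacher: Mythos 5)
Your proposal is correct and follows essentially the same route as the paper: the paper's proof likewise writes the super Jacobian of the transition $\Phi$ of Theorem \ref{gentransition} in block form $\left(\begin{smallmatrix} A & B \\ C & D\end{smallmatrix}\right)$ and computes $\mbox{Ber}(\mbox{Jac}(\Phi))=-1$ directly, which is exactly what your Schur-complement calculation (with $CA^{-1}B=\lambda\theta_{11}\theta_{21}z_{11}^{-1}\partial_{z_{21}}M$ killed by Jacobi's identity and $\partial_{z_{21}}\det M=0$) establishes. Your write-up in fact makes explicit two points the paper leaves implicit — the precise mechanism by which the $\omega$-correction cancels, and why the resulting constant cocycle trivializes the Berezinian sheaf — but the strategy is the same.
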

\begin{proof} We can work locally, considering transformations between $\mathcal{U}_0$ and $\mathcal{U}_1$. 
Then, using the results of the previous section, we can write the transition functions for an element of the family $\proj 2_\omega (\mathcal{F}_\mani)$ as in (\ref{eq:eventranf})
where $\lambda \in \mathbb{C}$ is a representative of the class $\omega \in H^1 (\mathcal{T}_{\proj 2} (-3)) \cong \mathbb{C}$. We can now compute the (super) Jacobian of this transformation, obtaining: 
\begin{align}
\mbox{Jac}\, (\Phi) = \left ( \begin{array}{c|c}  
A & B \\
\hline
C & D 
\end{array}
\right )
\end{align}
where one has
\begin{align}
& A = \left ( \begin{array}{ccc} 
- \frac{1}{(z_{11})^2} & & 0 \\
- \frac{z_{21}}{(z_{11})^2}- 2 \lambda \frac{\theta_{11} \theta_{21}}{(z_{11})^3} & &  \frac{1}{z_{11}}
\end{array}
\right ) \qquad \qquad 
& B & = \left ( \begin{array}{ccc} 
0 & & 0 \\
\lambda \frac{\theta_{21}}{(z_{11})^2} & & - \lambda \frac{\theta_{11}}{(z_{11})^2}
\end{array}
\right ) \nonumber \\  
& C = \left ( \begin{array}{c@{}cc@{}}
\partial_{z_{11}} M \left ( \begin{array}{c}
\theta_{11} \\
\theta_{21}
\end{array}  
\right ) & \; & 
\partial_{z_{21}}
M \left ( \begin{array}{c}
\theta_{11} \\
\theta_{21}
\end{array}
\right )
\end{array}
\right ) \qquad \qquad 
& D & = M.
\end{align}
A direct computation of the Berezinian of this Jacobian gives
\begin{align}
\mbox{Ber} \, (\mbox{Jac}\, (\Phi)) = -1 
\end{align}
which concludes the proof. 
\end{proof}
\begin{remark} Observe that in the setting of the above Theorem there is no orientation issue that prevents from making $-1$ into $+1$ in all the overlaps. 

\end{remark}
\begin{remark} 
Note that $\operatorname{Jac}(\Phi)^{T}$, when expressed in the coordinates $z_{10},z_{20},\theta_{10},\theta_{20}$ by means of the map $\Phi^{-1}$, is the transition matrix for the super tangent sheaf $\mathcal{T}_\mani$ from $\mathcal{U}_1$ to 
$\mathcal{U}_0$.

\noindent
Therefore the inverse matrix $(\operatorname{Jac}(\Phi)^{T})^{-1}$ is the transition matrix for $\mathcal{T}_\mani$ from $\mathcal{U}_0$ to $\mathcal{U}_1$.

\noindent
 In particular one has 
\bear 
(\operatorname{Jac}(\Phi)^T \mbox{ mod }\mathcal{J}_\mani)^{-1}
&=& \left ( \begin{array}{c|c}  
(A^T)^{-1} \mbox{ mod }\mathcal{J}& 0 \\
\hline
0 & (M^T)^{-1} 
\end{array}
\right )
\eear
Note that $(A^T)^{-1} \mbox{ mod }\mathcal{J}_\mani$ is the transition matrix for $\mathcal{T}_{\proj 2}$ and $(M^T)^{-1}$ is the one for $\mathcal{F}^\ast_\mani$, therefore  one obtains the isomorphism
\begin{equation}\label{restrTan}
\mathcal{T}_\mani \lfloor_{\proj 2}\cong \mathcal{T}_{\proj 2}\oplus \mathcal{F}_\mani^\ast.
\end{equation}
\end{remark}

\begin{remark} We recall that in \cite{Manin} the Berezinian sheaf of a supermanifold is defined as $Ber_\mani \defeq Ber((\Omega_\mani^1)_{\operatorname{odd}})^\ast$,
where $(\Omega_\mani^1)_{\operatorname{odd}}$ is the cotangent sheaf of $\mani$ defined by means of the {\em odd} differentiation $f\mapsto d_{\operatorname{odd}}f$. This is very convenient for example  for the integration theory on a supermanifold. 
We have adopted the equivalent definition $Ber_\mani \defeq Ber(\mathcal{T}_\mani)$, with $\mathcal{T}_\mani$ the tangent sheaf, stressing the similarity of this definition with the definition of the first Chern class $c_1(M)=c_1(\mathcal{T}_M)$ of an ordinary manifold $M$. 
The fact that the two definitions agree can be seen by comparing the Berezinians of the related transition matrices of $\mathcal{T}_\mani$ and  $(\Omega_\mani^1)_{\operatorname{odd}}$.
\end{remark}
\begin{remark}As concluding remark, we stress that it is likely that this result holds true for a larger class of (non-projected) supermanifolds, having $\manir$ given by any complex surface and $\mathcal{E}$ a rank two locally free sheaf such that  
$\det \mathcal{E} = K_{\manir}$, the canonical line bundle of the reduced manifold. We leave to a future work to provide a suitable generalisation of adjunction theory applicable in the case of non-projected supermanifolds.
\end{remark}
\section{Even Picard Group of $\proj2_{\omega} (\mathcal{F}_\mani)$ and non-projectivity}

Now that we know some geometry of the non-projected Calabi-Yau supermanifolds $\proj 2_\omega (\mathcal{F}_\mani)$, we take on the main focus of the paper: whether there exists an embedding of these supermanifolds into some 
supermanifold {\em with a universal property}, such as, for example, a projective superspace $\proj {n|m}$. 
In the language of Grothendieck, this calls for a search for \emph{very ample} locally-free sheaves of $\mathcal{O}_\mani$-modules of rank $1|0$. \\
The invertible sheaf $\mathcal{O}_{\proj {n|m}} (1)$ on $\proj {n|m}$, defined as the pull-back $\pi^\ast\mathcal{O}_{\proj n}(1)$ by the canonical projection $\pi\colon \proj {n|m}\to \proj n$, plays an important role for maps from complex supermanifolds $\mani$ into  $\proj {n|m}$. Indeed, as in ordinary algebraic geometry, if $\mathcal{E}$ 
is a certain globally generated locally free sheaf of $\mathcal{O}_\mani$-modules of rank $1|0$, having $n+1|m$ global sections $\{ s_0, \ldots, s_n | \xi_1, \ldots, \xi_m \}$, then there exists a morphism $\phi_\mathcal{E} : \mani \rightarrow \proj{n|m}$ such that 
$\mathcal{E} = \phi^\ast_{\mathcal{E}} (\mathcal{O}_{\proj {n|m}} (1))$ and such that $s_i = \phi^\ast_{\mathcal{E}} (X_i) $ and $\xi_j = \phi^\ast_{\mathcal{E}} (\Theta_j)$ for $i = 0, \ldots, n$ and $j = 1, \ldots, m$. Notice that also the converse is true, that 
is given a morphism $\phi : \mani \rightarrow \proj {n|m} $, then there exists a globally generated sheaf of $\mathcal{O}_\mani$-modules $\mathcal{E}_\phi$ such that it is generated by the global sections $\phi^\ast (X_i)$ and $\phi^\ast (\Theta_j)$ for 
$i = 0, \ldots, n$ and $j = 1, \ldots, m$. Relying on this result, we can give the following definition.
\begin{definition}[Projective Supermanifold] We say that a complex supermanifold $\mani$ is projective if there exists a morphism $\phi : \mani \rightarrow \proj{n|m}$ such that $\phi$ is injective on $\manir$ and its differential $d\phi$ is injective everywhere 
on $\mathcal{T}_{\mani}$. Equivalently, $\phi$ identifies $\mani$ with a closed sub-supermanifold of $\proj {n|m}$.
\end{definition}
\noindent Clearly, again, this calls for a search for suitable (very ample) locally-free sheaves of $\mathcal{O}_\mani$-modules of rank $1|0$ to set up the morphism into $\proj {n|m}$. We will prove that for any non-projected supermanifold 
$\proj 2_\omega (\mathcal{F}_\mani)$, regardless of how one chooses the fermionic sheaf $\mathcal{F}_\mani$, such morphism cannot exist as there are no $1|0$ sheaves available to realize it. \\
Notice that considering a rank $1|0$ locally-free sheaf of $\mathcal{O}_\mani$-modules, that we will call an \emph{even invertible sheaf}, corresponds to deal with transition functions $g_{ij}$ taking values into $\left (\mathcal{O}_{\mani}^\ast \right )_0 \cong \mathcal{O}^\ast_{\mani,0}$ as the transformation needs to be invertible and a parity-preserving one. This has the important consequence that $\mathcal{O}^\ast_{\mani,0}$ is a sheaf of \emph{abelian groups}, so that we are allowed to consider its cohomology groups, without confronting the issues related to the definition of non-abelian cohomology (notice that the full sheaf $\mathcal{O}^\ast_\mani$ is indeed \emph{not} a sheaf of abelian groups). Clearly, in order to define an even invertible sheaf, the transition functions have to be $1$-cocycles valued in the sheaf $\mathcal{O}^\ast_{\mani,0}$,  therefore, calling \emph{even Picard group} $\mbox{Pic}_0 (\mani)$ the group of isomorphism classes of even invertible sheaves on a supermanifold $\mani$, one finds that 
\bear
\mbox{{Pic}}_0 (\mani) \cong H^1 (\mathcal{O}^\ast_{\mani, 0}).
\eear
In what follows, we will grant ourselves the liberty to call the cohomology group $H^1 (\mathcal{O}^\ast_{\mani, 0})$ the even Picard group of the supermanifold, by implicitly referring to the above isomorphism. \\
Clearly, an empty even Picard group $\mbox{Pic}_0 (\mani)$ is enough to guarantee the non-existence of the embedding into projective super space $\phi : \mani \rightarrow \proj {n|m}$.   
\begin{theorem}[$\proj2_\omega (\mathcal{F}_\mani) $ is non-projective]\label{thm:nonproj}The even Picard group of the non-projected supermanifold $\proj 2_\omega (\mathcal{F}_\mani)$ is trivial, regardless of how one chooses the fermionic sheaf $\mathcal{F}_\mani$: 
\bear
\mbox{\emph{Pic}}_0 (\proj 2_{\omega} (\mathcal{F}_\mani)) = 0. 
\eear 
In particular, for any $\mathcal{F}_\mani$ such that $Sym^2 \mathcal{F}_\mani \cong \mathcal{O}_{\proj 2} (-3)$, if $\proj2_\omega (\mathcal{F}_\mani)$ is  non-projected, then it is also non-projective.
\end{theorem}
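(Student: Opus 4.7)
The plan is to compute $H^1(\mathcal{O}^*_{\mani,0})$ via a long exact sequence coming from the multiplicative decomposition of units, and to show that the relevant connecting homomorphism is injective whenever $\omega \neq 0$.

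First, I observe that an even section $f$ of $\mathcal{O}_{\mani,0}$ is a unit precisely when its reduction $f\,\mbox{mod}\,\mathcal{J}_\mani$ is a unit in $\mathcal{O}^*_{\proj 2}$. The even nilpotents sit in $\mathcal{J}_\mani^2 = Sym^2\mathcal{F}_\mani \cong \mathcal{O}_{\proj 2}(-3)$; since fermionic dimension $2$ gives $\mathcal{J}_\mani^4=0$, the assignment $f \mapsto 1+f$ is an isomorphism of sheaves of abelian groups from the additive $\mathcal{J}_\mani^2$ onto the multiplicative $1+\mathcal{J}_\mani^2$. This produces the short exact sequence
\[
0 \longrightarrow \mathcal{O}_{\proj 2}(-3) \longrightarrow \mathcal{O}^*_{\mani,0} \longrightarrow \mathcal{O}^*_{\proj 2} \longrightarrow 0.
\]
Taking cohomology and using $H^1(\proj 2,\mathcal{O}_{\proj 2}(-3))=0$, $H^1(\mathcal{O}^*_{\proj 2})\cong \mathbb{Z}$ generated by $\mathcal{O}_{\proj 2}(1)$, and $H^2(\proj 2,\mathcal{O}_{\proj 2}(-3))\cong \mathbb{C}$ generated by the class of $1/(X_0X_1X_2)$, the long exact sequence truncates to
\[
0 \longrightarrow \mbox{Pic}_0(\mani) \longrightarrow \mathbb{Z} \xrightarrow{\;\delta\;} \mathbb{C}.
\]
Since $\mathbb{C}$ is torsion-free, it suffices to prove $\delta([\mathcal{O}_{\proj 2}(1)]) \neq 0$.

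Next, I compute $\delta$ by \v{C}ech cohomology on the standard cover $\{\mathcal{U}_0,\mathcal{U}_1,\mathcal{U}_2\}$. The classical cocycle $g_{ij}=X_i/X_j$ representing $\mathcal{O}_{\proj 2}(1)$ admits the natural lifts $\tilde g_{01}=z_{11}$, $\tilde g_{12}=z_{22}$, $\tilde g_{20}=z_{20}$ to $\mathcal{O}^*_{\mani,0}$. To evaluate the coboundary on $\mathcal{U}_0\cap\mathcal{U}_1\cap\mathcal{U}_2$ I rewrite everything in the chart $\mathcal{U}_0$ using the non-projected transition functions: by Theorem \ref{gentransition}, $z_{11}=1/z_{10}$, while the analogous transformation on $\mathcal{U}_0\cap\mathcal{U}_2$ prescribed by the cocycle $\omega_{20}$ of (\ref{omegas}) gives $z_{22} = z_{10}/z_{20} + \lambda\,\theta_{10}\theta_{20}/z_{20}^2$. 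Multiplying out yields
\[
\tilde g_{01}\,\tilde g_{12}\,\tilde g_{20} \;=\; 1 \,+\, \lambda\,\frac{\theta_{10}\theta_{20}}{z_{10}\,z_{20}},
\]
and the identification (\ref{transtheta2}) of $\theta_{10}\theta_{20}$ with $1/X_0^3$ rewrites the anomalous term as $\lambda/(X_0X_1X_2)$. Therefore $\delta([\mathcal{O}_{\proj 2}(1)]) = \lambda\,[1/(X_0X_1X_2)]$, which is nonzero precisely when $\omega \neq 0$, and $\mbox{Pic}_0(\mani)=0$ follows.

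Finally, for non-projectivity: an embedding $\phi:\mani\hookrightarrow \proj{n|m}$ would restrict on reduced spaces to an injective morphism $\phi|_{\manir}:\proj 2\to \proj n$, pulling $\mathcal{O}_{\proj n}(1)$ back to some $\mathcal{O}_{\proj 2}(d)$ with $d\geq 1$. But this line bundle coincides with the restriction to $\manir$ of the even line bundle $\phi^*\mathcal{O}_{\proj{n|m}}(1)$ on $\mani$, which must be trivial by $\mbox{Pic}_0(\mani)=0$; this contradiction forces $\mani$ to be non-projective. The main obstacle is the coboundary computation above: correctly writing the transition functions on all three overlaps, reducing the triple product to a single chart, and using (\ref{transtheta2}) to identify the $\lambda$-anomaly with the explicit generator of $H^2(\mathcal{O}_{\proj 2}(-3))$.
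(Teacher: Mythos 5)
Your proposal is correct and follows essentially the same route as the paper's proof: the same multiplicative structural exact sequence $0 \to \mathcal{O}_{\proj 2}(-3) \to \mathcal{O}^\ast_{\mani,0} \to \mathcal{O}^\ast_{\proj 2} \to 1$, the same reduction to deciding whether the connecting homomorphism $\mathbb{Z}\to\mathbb{C}$ vanishes, and the identical \v{C}ech lift $\{z_{11}, z_{22}, z_{20}\}$ of the transition cocycle of $\mathcal{O}_{\proj 2}(1)$, whose coboundary is $1+\lambda/(X_0X_1X_2)$. Your only addition is the explicit closing argument that triviality of $\mbox{Pic}_0(\mani)$ forbids an embedding into $\proj{n|m}$ (via the pullback of $\mathcal{O}_{\proj{n|m}}(1)$ restricting to a nontrivial $\mathcal{O}_{\proj 2}(d)$), which the paper states more briefly but identically in substance.
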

\begin{proof} We put $\mani \defeq \proj 2_\omega (\mathcal{F}_\mani) $ and, remembering that $Sym^2 \mathcal{F}_\mani \cong \mathcal{O}_{\proj 2} (-3)$, we consider the short exact sequence 
\bear \label{starshort}
\xymatrix@R=1.5pt{
0 \ar[rr] & & \mathcal{O}_{\proj 2} (-3) \ar[rr]^{\rm exp} &&  \mathcal{O}^\ast_{\mani, 0}  \ar[rr] & & \mathcal{O^\ast}_{\proj 2} \ar[rr] && 1. 
}  
\eear
This is the multiplicative version of the structural exact sequence, with the first map defined by $\operatorname{exp}(h)=1+h$, as $h\in \mathcal{O}_{\proj 2} (-3)=(\mathcal{J}_\mani)_0$ and $(\mathcal{J}_\mani)_0^2=0$. The exact sequence above gives 
the following piece of long exact cohomology sequence 
\bear
\xymatrix@R=1.5pt{
0 \ar[r] & H^1(\mathcal{O}^\ast_{\mani, 0}) \ar[r] & H^1 (\mathcal{O}^\ast_{\proj 2})  \ar[r]^{\delta \quad} & H^2 (\mathcal{O}_{\proj 2} (-3)) \ar[r] &  \cdots .
}  
\eear
Now, one has $\mbox{Pic} (\proj 2) = H^1 (\mathcal{O}^\ast_{\proj 2})\cong \mathbb{Z}$ and $H^2 (\mathcal{O}_{\proj 2} (-3))\cong \mathbb{C}$, so everything reduces to decide whether the connecting homomorphism 
$\delta : \mbox{Pic} (\proj 2) \rightarrow H^2 (\mathcal{O}_{\proj 2} (-3))$ is the zero map or it is an injective map $\mathbb{Z}\to \mathbb{C}$. This can be checked directly, by a diagram-chasing computation, by looking at the following diagram of 
cochain complexes,
\bear
\xymatrix{
{C}^2 (\mathcal{O}_{\proj 2} (-3) ) \; \ar[r]^i & {C}^2 (\mathcal{O}^\ast_{\mani, 0}) &  \\ 
& {C}^1 (\mathcal{O}^\ast_{\mani, 0}) \ar[u] \ar@{->>}[r]^j & {C}^1 (\mathcal{O}^\ast_{\proj 2}), 
}  
\eear
that is obtained by considering the short exact sequence (\ref{starshort}) and the \v{C}ech cochain complexes of the sheaves involved in the sequence.\\ 
One then picks the generating line bundle $\langle \mathcal{O}_{\proj 2} (1) \rangle_{\mathcal{O}_{\proj 2}} \cong \mbox{Pic}(\proj 2)$ and, given the usual covering $\mathcal{U} \defeq \{ \mathcal{U}_{i}\}_{i=0}^2$ of $\proj 2$ as above, 
$\mathcal{O}_{\proj 2} (1)$ can be represented by the cocycle $g_{ij} \in Z^1 (\mathcal{U}, \mathcal{O}^\ast_{\proj 2})$ given by the transition functions of the line bundle itself. Explicitly, in homogeneous coordinates, these cocycles are given by 
\bear
\mathcal{O}_{\proj 2} (1) \longleftrightarrow \left \{ g_{01} = \frac{X_0}{X_1},\  g_{12} = \frac{X_1}{X_2},\  g_{20} = \frac{X_2}{X_0} \right \}.
\eear 
Since the map $j : {C}^1 (\mathcal{O}^\ast_{\mani, 0}) \rightarrow {C}^1 (\mathcal{O}^\ast_{\proj 2})$ is surjective, these cocycles are, in particular, images of elements in ${C}^1 (\mathcal{O}^\ast_{\mani, 0})$.
More precisely we have 
$$j(z_{11})=g_{01},\quad j(z_{22})=g_{12},\quad j(z_{20})=g_{20},$$ 
hence we can consider the lifting $\sigma=\{z_{11}, z_{22}, z_{20}\}$ of $\{g_{01},g_{12}, g_{20}\}$ to  ${C}^1 (\mathcal{O}^\ast_{\mani, 0})$. 
We stress  that  this is \emph{not} a cocycle in ${C}^1 (\mathcal{O}^\ast_{\mani, 0})$. Now, by going up in the diagram to ${C}^2 (\mathcal{O}^\ast_{\mani, 0})$ by means of the \v{C}ech boundary map 
$\delta : {C}^1 (\mathcal{O}^\ast_{\mani, 0})  \rightarrow {C}^2 (\mathcal{O}^\ast_{\mani, 0})  $, and using the bosonic transformation laws induced by the derivations (\ref{omegas}), one finds the following element: 
\begin{eqnarray*}
\delta(\sigma ) &= & 
1 + \frac{\lambda}{X_0 X_1 X_2}.  
\end{eqnarray*}   
We have that the element $1 + \frac{\lambda}{X_0 X_1 X_2}$ is the image of $\frac{\lambda}{X_0 X_1 X_2}$ through 
the map $i$. Hence we find that $\delta:\operatorname{Pic} (\proj 2) \longrightarrow H^2 (\mathcal{O}_{\proj 2} (-3))$ maps $ [\mathcal{O}_{\proj 2} (1)] \longmapsto [\frac{\lambda}{X_0 X_1 X_2}]$, which is non-zero for $\lambda\neq0$, i.e. for $\mani$ 
non-projected.
This leads to the conclusion that $\mbox{Pic}_0 (\proj 2_\omega (\mathcal{F}_\mani)) = H^1 (\mathcal{O}^\ast_{\proj 2_\omega (\mathcal{F}_\mani), 0}) = 0$, i.e. the only locally-free sheaf of rank $1|0$ on $\mani$ is $\mathcal{O}_\mani$. 

In particular there are no locally-free sheaves of rank $1|0$ to realise an embedding in a projective superspace, that is the non-projected supermanifold $\proj 2_\omega (\mathcal{F}_\mani)$ is non-projective.  
\end{proof}
\begin{remark} The previous theorem illustrates a substantial difference between complex algebraic supergeometry and the usual complex algebraic geometry, where projective spaces are the prominent 
ambient spaces. 
This fact was already known by Manin (see, for example \cite{Manin}, \cite{ManinNC}), who produced many examples of non-projective supermanifolds. 
However, in the next section \ref{sec:embedding} we will show that any $\mani=\proj 2_\omega(\mathcal{F}_\mani)$ can always be embedded in some super Grassmannian.
\end{remark}

\section{Embedding $\proj 2_\omega(\mathcal{F})$ into Super Grassmannians}\label{sec:embedding}
In this section we refer to \cite{Manin}, Chapter 4, \S 3, for a thorough treatment of super Grassmannians, and also of ordinary Grassmannians. A detailed review of the basic properties of super Grassmannians, with emphasis on their coordinate charts description, has recently appeared in \cite{NPG}.
\subsection{The universal property of super Grassmannians}
The super Grassmannians $G= G(a|b, V)$ have the following universal property. \vspace{5pt}\\
{\bf Universal Property:} for any superscheme $\mani$ and any locally-free sheaf of $\stsheaf$-modules $\mathcal{E}$ of rank $a|b$ on $\mani$ and any vector superspace $V\cong \mathbb{C}^{n|m}$ with a surjective sheaf map
 $V\otimes \mathcal{O}_\mani \to \mathcal{E}$, then there exists a {\em unique} map $\Phi:\mani\to G(a|b, V)$ such that the inclusion 
$\mathcal{E}^\ast \to V^\ast\otimes \mathcal{O}_\mani$ is the pull-back of the inclusion $\mathcal{S}_G\to \mathcal{O}_G^{\oplus n|m}$ from the sequence 
\bear \label{tautsuper}
\xymatrix@R=1.5pt{
0 \ar[rr] && \mathcal{S}_G \ar[rr] && \mathcal{O}_{G}^{\oplus n|m} \ar[rr] && \widetilde{\mathcal{S}}_G^\ast \ar[rr] &&  0.
}  
\eear
where $\mathcal{S}_G$ is the tautological sheaf of the super Grassmannian.\\ 
In this case, once a local basis $\{ e_1,\ldots,e_a | f_1,\ldots,f_b \}$ is fixed for $\mathcal{E}$ over some open set $\mathcal{U}$, then, over $\mathcal{U}$, the evaluation map $V\otimes \mathcal{O}_\mani\to \mathcal{E}$ is defined by a $(a|b)\times(n|m)$ 
matrix $M_\mathcal{U}$ with coefficients in $\mathcal{O}_\mani(\mathcal{U})$, and any reduction of  $M_\mathcal{U}$ into a standard form of type 
\begin{equation}\label{matrixstform}
\mathcal{Z}_{{I}} \defeq 
\left (
\begin{array}{ccc|ccc||ccc|ccc}
& &  & 1 \; & & & & & & & & \\
\;  & \; x_I \; & \; &  & \ddots & & & 0 & &\;  &\; \xi_I \; &\;  \\
& & & & & \; 1& & & & & & \\
\hline \hline
& & & & & & 1\; & & & & & \\
&\; \xi_I \; & & & 0& & &\ddots & & &\;  x_I \; & \\
& & & & & & & &\; 1 & & & 
\end{array}
\right ),
\end{equation}
by means of elementary row operations, is a local representation of the map $\Phi$.
\subsection{The Embedding Theorem}
We will prove the following result.
\begin{theorem}\label{embeddingth} Let $\mani = \proj 2_\omega(\mathcal{F}_\mani)$ and $\mathcal{T}_\mani$ its tangent sheaf. 
Let $V = H^0(Sym^k\mathcal{T}_\mani)$. Then, for any $k\gg 0$ the evaluation map $V\otimes\mathcal{O}_\mani\to Sym^k\mathcal{T}_\mani$ induces an embedding $\Phi_k:\mani\longrightarrow  G(2k|2k, V)$.
\end{theorem}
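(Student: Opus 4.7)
The plan is to proceed in three steps: (i) show that $Sym^k\mathcal{T}_\mani$ is globally generated for $k\gg 0$, so that the universal property of $G(2k|2k,V)$ recalled above supplies the morphism $\Phi_k$; (ii) show $\Phi_k$ is injective on the underlying topological space $\proj 2$; (iii) show the differential $d\Phi_k$ is injective everywhere on $\mathcal{T}_\mani$, including in the odd directions.

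For (i), I would exploit the $\mathcal{J}_\mani$-adic filtration on $Sym^k\mathcal{T}_\mani$. Since $\mathcal{J}_\mani^3=0$, this filtration has length at most three and its graded pieces are coherent $\mathcal{O}_{\proj 2}$-modules. Using $\mathcal{T}_\mani|_{\proj 2}\cong \mathcal{T}_{\proj 2}\oplus \mathcal{F}_\mani^\ast$ from equation (\ref{restrTan}), the top graded piece decomposes as $\bigoplus_{i+j=k} Sym^i\mathcal{T}_{\proj 2}\otimes \wedge^j\mathcal{F}_\mani^\ast$, while the lower pieces are the analogous decompositions tensored with $\mathcal{J}_\mani/\mathcal{J}_\mani^2\cong \mathcal{F}_\mani$ or with $\mathcal{J}_\mani^2\cong \mathcal{O}_{\proj 2}(-3)$. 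By ampleness of $\mathcal{T}_{\proj 2}$ and Serre/Castelnuovo--Mumford vanishing on $\proj 2$, each summand is globally generated with vanishing $H^1$ for $k\gg 0$; lifting sections inductively through the short filtration (using the $H^1$-vanishing at each stage) yields global generation of $Sym^k\mathcal{T}_\mani$ itself. A rank count $(k+1)+(k-1)=2k$ on the even side and $2k$ on the odd side confirms the target Grassmannian $G(2k|2k,V)$ is the correct one.

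For (ii) and (iii), restricting the linear system to the direct summand $Sym^k\mathcal{T}_{\proj 2}$ of $Sym^k\mathcal{T}_\mani|_{\proj 2}$ already defines a closed embedding $\proj 2\hookrightarrow G(k+1,H^0(Sym^k\mathcal{T}_{\proj 2}))$ for $k\gg 0$, by ampleness of $\mathcal{T}_{\proj 2}$ and the twisted Euler sequence. This embedding factors $\Phi_k|_{\proj 2}$ through a natural projection of Grassmannians, which handles (ii) and the even part of (iii). For the odd part of (iii), one must exhibit sections of $Sym^k\mathcal{T}_\mani$ whose first-order expansion at a point $p$ pairs non-degenerately with $\mathcal{T}_{\mani,p}^{\mathrm{odd}}\cong \mathcal{F}_{\mani,p}^\ast$; these come from the summand $Sym^{k-1}\mathcal{T}_{\proj 2}\otimes \mathcal{F}_\mani^\ast$ of the restricted sheaf, which is globally generated for $k\gg 0$.

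The main obstacle is the third step, executed in local super coordinates: one must verify that the row-reduced form (\ref{matrixstform}) of the evaluation matrix pulls back under $\Phi_k$ to a set of local generators of $\mathcal{O}_\mani$ around each point, so that $\Phi_k$ is genuinely a closed embedding. Equivalently, the sub-linear systems coming from the different summands $Sym^{k-j}\mathcal{T}_{\proj 2}\otimes \wedge^j\mathcal{F}_\mani^\ast$ must combine, despite the non-trivial twist encoded by $\omega$, to separate every pair of infinitesimally close superpoints, including those separated only in a fermionic direction. The balanced rank $2k|2k$ imposed by the Calabi--Yau condition is precisely what makes the matching with $G(2k|2k,V)$ possible.
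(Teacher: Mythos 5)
Your steps (i) and (ii) are sound and essentially coincide with the paper's argument: the same three-summand decomposition of $Sym^k\mathcal{T}_\mani|_{\proj 2}$, ampleness of $\mathcal{T}_{\proj 2}$, the $H^1$-vanishing that lets you lift sections through the $\mathcal{J}_\mani$-adic filtration, and the rank count $2k|2k$. The gap is in step (iii), precisely at the point you yourself flag as ``the main obstacle'' and then do not resolve. Separating odd tangent vectors is \emph{not} a consequence of global generation of the odd summand $Sym^{k-1}\mathcal{T}_{\proj 2}\otimes\mathcal{F}^\ast_\mani$ of the restricted sheaf: that only gives surjectivity of the evaluation $V\to(\mathcal{E}_k)_1(x)$ onto the odd part of the \emph{fibre}, i.e.\ it controls sections modulo $\mathfrak{M}_x=\mathfrak{m}_x+\mathcal{J}_{\mani,x}$. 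An odd tangent vector at $x$ is dual to the odd part of $\mathfrak{M}_x/\mathfrak{M}_x^2$, which is $\mathcal{F}_\mani(x)$; to separate it you need a global section $s$ with $s(x)=0$ whose $\theta$-linear coefficient at $x$ is prescribed. Those coefficients are invisible in $\mathcal{E}_k|_{\proj 2}$: they live in the summand $\mathcal{E}_k|_{\proj 2}\otimes\mathcal{F}_\mani$ of $\mathcal{E}_k|_{\mani^{(2)}}$, where $\mathcal{O}_{\mani^{(2)}}=\slantone{\stsheaf}{\mathcal{J}^2_\mani}$, so your appeal to a summand of the restriction to $\proj 2$ addresses the wrong object.

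What is actually needed (and what the paper supplies) is, first, a precise criterion, namely Proposition \ref{prop:suffemb}: $\Phi_k$ is an embedding iff for every length-two sub-superscheme $\mathcal{Z}\subset\mani$ the induced map $V\to\mathcal{E}_k\otimes_{\stsheaf}\mathcal{O}_{\mathcal{Z}}$ has rank $>2k|2k$, with the sufficient pointwise condition that $V\to\mathcal{E}_k/\mathfrak{M}_x^2\mathcal{E}_k$ be surjective; and second, a proof of this surjectivity by factoring it as $H^0(\mathcal{E}_k)\twoheadrightarrow H^0(\mathcal{E}_k|_{\mani^{(2)}})\to H^0(\mathcal{E}_k/\mathfrak{M}_x^2\mathcal{E}_k)$. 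The first surjection comes from the vanishing $H^1(\mathcal{E}_k\otimes\mathcal{J}_\mani^2)=H^1(\mathcal{E}_k|_{\proj 2}(-3))=0$; this is the step where the non-split extension class $\omega$ is genuinely overcome, since the sections of $\mathcal{E}_k|_{\mani^{(2)}}$ carrying the $\theta$-linear data must be lifted to $\mani$ itself. The second surjection follows by viewing $\mathcal{E}_k|_{\mani^{(2)}}$ as an $\mathcal{O}_{\proj 2}$-module via Lemma \ref{lemM2} and applying the same Euler-sequence vanishing to $\overline{\mathfrak{M}_x^2}\,\mathcal{E}_k|_{\mani^{(2)}}$. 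You already have all the cohomological tools from your step (i); what is missing is their application to $\mathfrak{M}_x^2$ (and to length-two subschemes) rather than to the fibre, and without that the local row-reduction verification you defer to never gets off the ground.
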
 
\noindent We first introduce some notations.\vspace{6pt}\\
\noindent \emph{Notation.}  Having at our disposal the structure sheaf $\stsheaf$ of $\mani$ we can also consider the sub superscheme of $\mani$, given by the pair $(\proj 2, \mathcal{O}_{\mani^{(2)}})$, where we have defined 
$\mathcal{O}_{\mani^{(2)}} \defeq \slantone{\mathcal{O}_{\mani}}{\mathcal{J}_\mani^2}$. We stress that this is \emph{not} a supermanifold: indeed it fails to be locally isomorphic to any local model of the kind $\mathbb{C}^{p|q}$, and, more generally, 
it is locally isomorphic to an affine superscheme for some super ring. \\
We call this sub superscheme $\mani^{(2)}$ and we characterise its geometry in the following lemma.
\begin{lemma}[The Superscheme $\mani^{(2)}$]\label{lemM2} Let $\mani^{(2)}$ be the superscheme characterised by the pair $(\proj 2, \mathcal{O}_{\mani^{(2)}})$, where $\mathcal{O}_{\mani^{(2)}}= \slantone{\stsheaf}{\mathcal{J}_\mani^2}$. 
Then $\mani^{(2)} $ is projected and its structure sheaf, as a sheaf of $\mathcal{O}_{\proj 2}$-algebras, is $\mathcal{O}_{\mani^{(2)}} \cong \mathcal{O}_{\proj 2} \oplus \mathcal{F}_\mani$ .
\end{lemma}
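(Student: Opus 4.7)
The plan is to observe that quotienting by $\mathcal{J}_\mani^2$ kills exactly the piece of the transition functions where the non-projection obstruction lives, so $\mani^{(2)}$ automatically inherits a canonical splitting. Concretely, by Theorem \ref{gentransition}, the bosonic transition of $\mani$ from $\mathcal{U}_0$ to $\mathcal{U}_1$ has the form
\[
z_{10} = \tfrac{1}{z_{11}}, \qquad z_{20} = \tfrac{z_{21}}{z_{11}} + \lambda\tfrac{\theta_{11}\theta_{21}}{(z_{11})^2},
\]
and the correction term $\lambda \theta_{11}\theta_{21}/(z_{11})^2$ lies in $\mathcal{J}_\mani^2$. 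Reducing modulo $\mathcal{J}_\mani^2$, these transitions (and the analogous ones between the other pairs of charts) coincide with the standard transition functions of $\proj 2$ in the coordinates $X_i/X_j$.

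Using this, I would build the splitting as follows. First, define local maps $s_i \colon \mathcal{O}_{\proj 2}|_{\mathcal{U}_i} \to \mathcal{O}_{\mani^{(2)}}|_{\mathcal{U}_i}$ of sheaves of $\mathbb{C}$-algebras by $X_j/X_i \mapsto z_{ji} \bmod \mathcal{J}_\mani^2$ for $j\neq i$. On each $\mathcal{U}_i$ this is well defined and is a section of the canonical projection $\mathcal{O}_{\mani^{(2)}} \twoheadrightarrow \mathcal{O}_{\proj 2}$. By the observation above, on overlaps $\mathcal{U}_i\cap\mathcal{U}_j$ the images $s_i(X_k/X_i)$ and $s_j(X_k/X_j)$ satisfy exactly the same cocycle relations as the standard projective coordinates, so the $s_i$ glue to a global algebra morphism $\pi^\sharp \colon \mathcal{O}_{\proj 2} \to \mathcal{O}_{\mani^{(2)}}$ splitting the sequence
\[
0 \longrightarrow \mathcal{J}_\mani/\mathcal{J}_\mani^2 \longrightarrow \mathcal{O}_\mani/\mathcal{J}_\mani^2 \longrightarrow \mathcal{O}_{\proj 2} \longrightarrow 0.
\]
This splitting exhibits $\mani^{(2)}$ as projected.

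To conclude the identification of algebras, note that $\mathcal{J}_\mani/\mathcal{J}_\mani^2 = \mathcal{F}_\mani$ by the very definition of the fermionic sheaf. Hence as $\mathcal{O}_{\proj 2}$-modules $\mathcal{O}_{\mani^{(2)}} \cong \mathcal{O}_{\proj 2}\oplus \mathcal{F}_\mani$, and the algebra structure is determined because the product of any two local sections of $\mathcal{F}_\mani = \mathcal{J}_\mani/\mathcal{J}_\mani^2$ lands in $\mathcal{J}_\mani^2/\mathcal{J}_\mani^2 = 0$. Thus the multiplication on the right-hand side is the trivial (square-zero) extension of $\mathcal{O}_{\proj 2}$ by $\mathcal{F}_\mani$, matching the one on $\mathcal{O}_{\mani^{(2)}}$.

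There is no serious obstacle here: the content is really the conceptual point that the non-projectedness obstruction $\omega \in H^1(\mathcal{T}_{\proj 2}\otimes Sym^2\mathcal{F}_\mani)$ lives in the part corresponding to $\mathcal{J}_\mani^2$, and therefore disappears upon passing to $\mani^{(2)}$. The only step that needs care is the verification that the local sections $s_i$ glue, but this is immediate from inspection of the transition formulas in (\ref{eq:eventranf}) once one notices that all nilpotent corrections sit in $\mathcal{J}_\mani^2$.
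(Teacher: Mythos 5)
Your proof is correct, but it takes a different route from the paper's. The paper's argument is intrinsic and coordinate-free: it uses the parity splitting $\mathcal{O}_{\mani^{(2)}} = \slantone{\mathcal{O}_{\mani,0}}{\mathcal{J}_\mani^2} \oplus \slantone{\mathcal{O}_{\mani,1}}{\mathcal{J}_\mani^2}$ together with the fact that in fermionic dimension $2$ one has $(\mathcal{J}_\mani)_0 = \mathcal{J}_\mani^2$, so the even part of the quotient is \emph{canonically} isomorphic to $\mathcal{O}_{\proj 2}$ as a sheaf of algebras; the splitting is then simply the inclusion of the even subalgebra, with no charts, no gluing, and no appeal to Theorem \ref{gentransition}. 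Your argument instead builds the section $\pi^\sharp$ chart by chart from the explicit transition functions (\ref{eq:eventranf}) and verifies the cocycle condition on overlaps. Both are valid; what the paper's approach buys is generality and canonicity --- it works verbatim for any supermanifold of odd dimension $2$ over any reduced manifold, and it makes clear the splitting involves no choices --- while yours has the merit of showing concretely \emph{where} the obstruction class dies, namely in the correction term $\lambda\theta_{11}\theta_{21}/(z_{11})^2 \in \mathcal{J}_\mani^2$. One small point in your write-up deserves attention: asserting that $s_i$ is "well defined" by prescribing the images of the coordinates $X_j/X_i$ requires a justification in the holomorphic category (a morphism of sheaves of algebras is not determined by images of generators of a dense subalgebra without some argument), and the cleanest justification is precisely the paper's observation: since $(\mathcal{J}_\mani)_0 = \mathcal{J}_\mani^2$, the even part of $\mathcal{O}_{\mani^{(2)}}$ is reduced and equals $\mathcal{O}_{\proj 2}$, so $s_i$ is forced to be (the inverse of) this canonical identification --- at which point the gluing you check becomes automatic.
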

\begin{proof} It is enough to observe that the parity splitting of the structure sheaf reads $\mathcal{O}_{\mani^{(2)}} = \slantone{\mathcal{O}_{\mani,0}}{\mathcal{J}_\mani^2} \oplus \slantone{\mathcal{O}_{\mani,1 }}{\mathcal{J}_{\mani}^2}$, hence the defining 
short exact sequence for the even part reduces to an isomorphism $\mathcal{O}_{\mani,0}^{(2)} \cong \mathcal{O}_{\proj 2}$. We therefore must have that the structure sheaf gets endowed with a structure of 
$\mathcal{O}_{\proj 2}$-module given by $\mathcal{O}_{\proj 2} \oplus \mathcal{F}_\mani$, that actually coincides with the parity splitting.
We observe that in the $\mathcal{O}_{\proj 2}$-algebra  $\mathcal{O}_{\mani^{(2)}}\cong \mathcal{O}_{\proj 2} \oplus \mathcal{F}_\mani$ the product $\mathcal{F}_\mani \otimes_{\mathcal{O}_{\proj 2}} \mathcal{F}_\mani \rightarrow \mathcal{O}_{\proj 2}$ 
is null. \end{proof}
\noindent The first requirement for Theorem \ref{embeddingth} is that the morphism $\Phi_k$ is well defined. This is a consequence of the following Lemma.
\begin{lemma} The following facts hold.
\begin{enumerate}
\item The restriction maps $V\longrightarrow H^0(Sym^k \mathcal{T}_\mani|_{\mani^{(2)}})$ and $V\longrightarrow H^0(Sym^k \mathcal{T}_\mani|_{\proj 2})$ are surjective for $k\gg 0$.
\item
The locally-free sheaf of $\stsheaf$-modules $Sym^k\mathcal{T}_\mani$ is generated by global sections, i.e. the evaluation map
$V\otimes \mathcal{O}_\mani\to  Sym^k\mathcal{T}_\mani$ is surjective, for $k\gg 0$.
\end{enumerate}
\end{lemma}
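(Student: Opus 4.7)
The overall strategy is to exploit the three-step filtration
\[
Sym^k\mathcal{T}_\mani \;\supset\; \mathcal{J}_\mani\, Sym^k\mathcal{T}_\mani \;\supset\; \mathcal{J}_\mani^2\, Sym^k\mathcal{T}_\mani \;\supset\; 0
\]
(valid since $\mathcal{J}_\mani^3 = 0$) in order to reduce every cohomological question on $\mani$ to Serre-type vanishing on $\proj 2$. The Remark following the Calabi--Yau theorem already identifies $\mathcal{T}_\mani|_{\proj 2} \cong \mathcal{T}_{\proj 2} \oplus \mathcal{F}_\mani^\ast$. Base change along $\mathcal{O}_\mani \twoheadrightarrow \mathcal{O}_{\proj 2}$ gives $Sym^k\mathcal{T}_\mani / \mathcal{J}_\mani Sym^k\mathcal{T}_\mani \cong Sym^k(\mathcal{T}_\mani|_{\proj 2})$, and the same formula twisted by $\mathcal{J}_\mani^p/\mathcal{J}_\mani^{p+1}$ describes the higher graded pieces. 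Expanding the supersymmetric power, using that $\mathcal{F}_\mani^\ast$ is purely odd of rank $0|2$ (so that $\bigwedge^2\mathcal{F}_\mani^\ast \cong \mathcal{O}_{\proj 2}(3)$), each graded piece breaks into a direct sum of summands of the form $Sym^j\mathcal{T}_{\proj 2}\otimes \mathcal{L}$ with $j\in\{k-2,k-1,k\}$ and $\mathcal{L}$ a fixed line bundle on $\proj 2$, possibly further tensored with $\mathcal{F}_\mani$ or $\mathcal{F}_\mani^\ast$.

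For part (1), I would invoke the short exact sequence
\[
0 \to \mathcal{J}_\mani^2\, Sym^k\mathcal{T}_\mani \to Sym^k\mathcal{T}_\mani \to Sym^k\mathcal{T}_\mani|_{\mani^{(2)}} \to 0,
\]
so that surjectivity of $V \to H^0(Sym^k\mathcal{T}_\mani|_{\mani^{(2)}})$ follows from $H^1(\mathcal{J}_\mani^2\, Sym^k\mathcal{T}_\mani) = 0$. Since $\mathcal{J}_\mani^2 \cong \mathcal{O}_{\proj 2}(-3)$ is annihilated by $\mathcal{J}_\mani$, this kernel is isomorphic, as an $\mathcal{O}_{\proj 2}$-module, to $\mathcal{O}_{\proj 2}(-3)\otimes Sym^k(\mathcal{T}_\mani|_{\proj 2})$, and the required vanishing holds for $k \gg 0$ by Serre vanishing (alternatively, by explicit Bott formulas on $\proj 2$), since $\mathcal{T}_{\proj 2}$ is ample and the twists $\mathcal{L}$ appearing are of bounded degree. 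The analogous short exact sequence
\[
0 \to \mathcal{J}_\mani\, Sym^k\mathcal{T}_\mani \to Sym^k\mathcal{T}_\mani \to Sym^k\mathcal{T}_\mani|_{\proj 2} \to 0,
\]
together with the refined filtration on $\mathcal{J}_\mani\, Sym^k\mathcal{T}_\mani$ whose graded pieces are $\mathcal{F}_\mani\otimes Sym^k(\mathcal{T}_\mani|_{\proj 2})$ and $\mathcal{O}_{\proj 2}(-3)\otimes Sym^k(\mathcal{T}_\mani|_{\proj 2})$, handles the second surjectivity in the same way.

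For part (2), I would apply a super Nakayama argument: a locally-free $\mathcal{O}_\mani$-module $\mathcal{E}$ is globally generated if and only if, at each closed point $p \in \proj 2$, the fiber $\mathcal{E}\otimes k(p) = \mathcal{E}|_{\proj 2}\otimes k(p)$ is spanned by restrictions of global sections of $\mathcal{E}$. Part (1) provides a surjection $V \twoheadrightarrow H^0(Sym^k\mathcal{T}_\mani|_{\proj 2})$, so it suffices to show that $Sym^k(\mathcal{T}_\mani|_{\proj 2})$ is globally generated on $\proj 2$ for $k\gg 0$; this follows once more from the explicit decomposition of $Sym^k(\mathcal{T}_{\proj 2}\oplus\mathcal{F}_\mani^\ast)$ and the ampleness of $\mathcal{T}_{\proj 2}$. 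The main technical hurdle throughout is the careful identification of the graded pieces of the $\mathcal{J}_\mani$-adic filtration of $Sym^k\mathcal{T}_\mani$ as honest $\mathcal{O}_{\proj 2}$-modules of the expected form; once this supersymmetric-algebra bookkeeping is in place, both statements reduce to classical vanishing and generation statements on $\proj 2$.
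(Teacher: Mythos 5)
Your proof is correct and follows essentially the same route as the paper: reduce part (2) to fiberwise surjectivity via the super Nakayama lemma, obtain part (1) from the restriction sequences with kernels $\mathcal{J}_\mani\, Sym^k\mathcal{T}_\mani$ and $\mathcal{J}_\mani^2\, Sym^k\mathcal{T}_\mani$ identified as $\mathcal{O}_{\proj 2}$-modules, and deduce all the needed $H^1$ vanishings from the decomposition of $Sym^k(\mathcal{T}_{\proj 2}\oplus\mathcal{F}_\mani^\ast)$ together with the ampleness of $\mathcal{T}_{\proj 2}$ (or the Euler/Bott alternative). The only cosmetic difference is that you filter $\mathcal{J}_\mani\, Sym^k\mathcal{T}_\mani$ with graded pieces $\mathcal{F}_\mani\otimes Sym^k(\mathcal{T}_\mani|_{\proj 2})$ and $\mathcal{O}_{\proj 2}(-3)\otimes Sym^k(\mathcal{T}_\mani|_{\proj 2})$, whereas the paper splits it directly as a sum of these $\mathcal{O}_{\proj 2}$-modules; both yield the same vanishing.
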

\begin{proof} Let us consider the composition of linear maps
\begin{equation}\label{surjglobsec} V\longrightarrow H^0(Sym^k \mathcal{T}_\mani |_{\mani^{(2)}})\longrightarrow H^0(Sym^k\mathcal{T}_\mani |_{\proj 2})\longrightarrow Sym^k\mathcal{T}_\mani(x),
\end{equation}
with $Sym^k\mathcal{T}_\mani(x)$ the fibre at $x$. By the supercommutative version of the Nakayama Lemma - see for example lemma 4.7.1 in \cite{Vara} - to prove fact (2) one has to show that for any $x\in {\proj 2}$ the linear map 
$V\to Sym^k\mathcal{T}_\mani(x)$ is surjective. 
Therefore we can reduce ourselves to show the surjectivity of all the linear maps in the composition, which will also include a proof of fact (1). 
 For simplicity of notation we set $\mathcal{E}_k \defeq Sym^k\mathcal{T}_\mani$. To see the surjectivity of the last map, we observe that, by (\ref{restrTan}), one has
 \begin{eqnarray}\label{Skdecomp}
 \mathcal{E}_k|_{\proj 2}&\cong &Sym^k( \mathcal{T}_{\proj 2}\oplus \mathcal{F}^\ast_\mani) \nonumber \\
 &=&(Sym^k\mathcal{T}_{\proj 2})\oplus (Sym^{k-1} \mathcal{T}_{\proj 2}\otimes \mathcal{F}^\ast_\mani)\oplus  (Sym^{k-2} \mathcal{T}_{\proj 2}\otimes Sym^2 \mathcal{F}^\ast_\mani),
 \end{eqnarray}
 all the other summands being $0$, since $\mathcal{F}_\mani =\Pi E$ for some vector bundle $E$ of rank 2 and $Sym^{i}\mathcal{F}_\mani =\Pi^{i}\bigwedge^i E$. \\
 Now one can use the well known ampleness of the vector bundle $\mathcal{T}_{\proj 2}$ (see \cite{Hartample} for the definition of an ample vector bundle 
 in algebraic geometry) to conclude that all the higher cohomology groups $H^i( Sym^k\mathcal{T}_{\proj 2}(-i))$, $
H^i (Sym^{k-1} \mathcal{T}_{\proj 2}\otimes \mathcal{F}^\ast_\mani(-i))$,  $H^i (Sym^{k-2} \mathcal{T}_{\proj 2}\otimes Sym^2 \mathcal{F}^\ast_\mani(-i))$ vanish for $k\gg 0$, and hence all these vector bundles are generated by global sections, since they have Castelnuovo-Mumford regularity index equal to $0$, see \cite{Mumford}, lecture 14.

Alternatively, one can use the exact sequences 
\bear\label{SymmEuler}
\xymatrix@R=1.5pt{
0 \ar[r] &  (Sym^{m-1}\mathcal{O}_{\proj 2}^{\oplus 3})(m-1) \ar[r] & (Sym^m \mathcal{O}^{\oplus 3}_{\proj 2})(m)  \ar[r] & Sym^m \mathcal{T}_{\proj 2} \ar[r] & 0, 
}  
\eear
deduced from the Euler sequence, tensor them with $Sym^j \mathcal{F}_\mani$ for $j=0,1,2$ and use the 
fact that $H^i(\mathcal{F}_\mani(m))=0$ for any $i>0$ and that $\mathcal{F}_\mani(m)$ is generated by global sections, for any $m\gg 0$, to deduce the same conclusions for $Sym^m \mathcal{T}_{\proj 2}\otimes Sym^j\mathcal{F}_\mani$.

Recall the exact sequence 
\bear
\xymatrix@R=1.5pt{
0 \ar[r] &  \mathcal{E}_k\otimes \mathcal{J}_\mani \ar[r] & \mathcal{E}_k \ar[r] &  \mathcal{E}_k\lfloor_{\proj 2} \ar[r] & 0, 
}  
\eear
and observe that, as $\mathcal{J}_\mani^3=0$, one has that $\mathcal{J}_\mani$ is a 
$\mathcal{O}_\mani/\mathcal{J}_\mani^2$-module, \emph{i.e.} a $\mathcal{O}_{\mani^{(2)}}$-module.
As such, by Lemma \ref{lemM2} one also knows that $\mathcal{J}_\mani$, and hence also $\mathcal{E}_k\otimes \mathcal{J}_\mani$, has a structure of a $\mathcal{O}_{\proj 2}$-module, given as 
$\mathcal{E}_k\otimes \mathcal{J}_\mani \cong (\mathcal{E}_k |_{\proj 2} \otimes Sym^2\mathcal{F}_\mani)\oplus (\mathcal{E}_k |_{\proj 2}\otimes \mathcal{F}_\mani)\cong (\mathcal{E}_k |_{\proj 2}(-3))\oplus (\mathcal{E}_k|_{\proj 2}\otimes \mathcal{F}_\mani) $. \\
Similarly, let us consider the exact sequence 
\bear
\xymatrix@R=1.5pt{
0 \ar[r] &  \mathcal{E}_k\otimes \mathcal{J}_\mani^2 \ar[r] & \mathcal{E}_k \ar[r] & \mathcal{E}_k \lfloor_{\mani^{(2)}} \ar[r] & 0, 
}  
\eear
where $\mathcal{E}_k\otimes \mathcal{J}_\mani^2\cong \mathcal{E}_k |_{\proj 2}(-3)$ is a 
$\mathcal{O}_{\proj 2}$-module. Similarly as above, one can show that $H^1(\mathcal{E}_k |_{\proj 2}\otimes \mathcal{F}_\mani)=0$ and $H^1(\mathcal{E}_k |_{\proj 2}(-3))=0$ for $k\gg 0$, hence one has that 
$H^0(\mathcal{E}_k)\to H^0(\mathcal{E}_k|_{\mani^{(2)}})$ and $H^0(\mathcal{E}_k)\to H^0(\mathcal{E}|_{\proj 2})$ are surjective for $k\gg 0$.
\end{proof}
\noindent We now state an easy generalization of a well known embedding criterion for algebraic manifolds.
Recall that a superscheme $\mathcal{Z}$ of super dimension $0|m$ is affine and  it has $\dim_{\mathbb{C}}(\mathcal{O}_{\mathcal{Z}})<\infty$. This dimension is called the {\em length} of $\mathcal{Z}$.
\begin{prop} A morphism of algebraic or complex supermanifolds $\Phi\colon \mathcal{M}\to\mathcal{N}$ is an embedding if and only if for any sub-superscheme $\mathcal{Z}\subset \mathcal{M}$ of length $2$ one has that the composition $\mathcal{Z}\to \mathcal{M}\to\mathcal{N}$ is a sub-superscheme, that is the induced map $\mathcal{O}_{\mathcal{N}}\to\mathcal{O}_{\mathcal{Z}}$ is surjective.
\end{prop}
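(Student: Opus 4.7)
The plan is to adapt to the super setting the classical criterion which says that a morphism of algebraic varieties is a closed immersion if and only if it is injective on closed points and its differential is injective everywhere. Length-$2$ sub-superschemes are exactly the right test objects because they encode these two conditions simultaneously: a pair of distinct reduced points detects failure of point-injectivity, while a one-point fat sub-superscheme detects failure of injectivity of the super differential along either an even or an odd tangent direction. The forward direction is immediate: if $\Phi$ is an embedding, then $\mathcal{Z}\hookrightarrow\mathcal{M}\hookrightarrow\mathcal{N}$ is a composition of closed immersions, so $\mathcal{O}_{\mathcal{N}}\to\mathcal{O}_{\mathcal{Z}}$ is surjective.

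For the converse the key step is to classify sub-superschemes $\mathcal{Z}\subset\mathcal{M}$ of length $2$. Since $\mathcal{O}_{\mathcal{Z}}$ is a finite-dimensional super $\mathbb{C}$-algebra of total dimension $2$, a short argument shows that only three types arise: two distinct reduced points with $\mathcal{O}_{\mathcal{Z}}\cong\mathbb{C}\oplus\mathbb{C}$; a single reduced point $p$ with an even fat structure $\mathcal{O}_{\mathcal{Z}}\cong\mathbb{C}[t]/(t^2)$, $t$ even; or a single reduced point $p$ with an odd fat structure $\mathcal{O}_{\mathcal{Z}}\cong\mathbb{C}[\theta]/(\theta^2)$, $\theta$ odd (so $\theta^2=0$ automatically). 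The last two cases correspond bijectively, via the square-zero extension $f\mapsto f(p)+v(f)\epsilon$, to nonzero even or odd tangent vectors $v\in T_p\mathcal{M}$.

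With this classification I argue by contrapositive. If $\Phi$ identifies two distinct reduced points $p_1\neq p_2$, then for $\mathcal{Z}=\{p_1\}\sqcup\{p_2\}$ the map $\mathcal{O}_{\mathcal{N},\Phi(p_i)}\to\mathcal{O}_{\mathcal{Z}}=\mathbb{C}\oplus\mathbb{C}$ factors through evaluation at a single point and lands in the diagonal; if a nonzero super tangent vector $v$ at $p$ is killed by $d\Phi$, then for the associated fat sub-superscheme $\mathcal{Z}_v$ the pullback $\mathcal{O}_{\mathcal{N},\Phi(p)}\to\mathbb{C}\oplus\mathbb{C}\epsilon$ lands in the constants $\mathbb{C}$. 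In either case surjectivity fails. Conversely, injectivity on reduced points together with injectivity of the super differential everywhere implies, via a graded Nakayama argument on completed stalks, local surjectivity of $\Phi^{\sharp}$, and together with the topological closedness built into the notion of embedding this yields a closed immersion. The main obstacle is the classification step: one must verify that no exotic length-$2$ sub-superscheme has been overlooked, and that the odd fat point is genuinely included as a new test object beyond the classical purely-even picture, with parity-preservation of $\Phi^{\sharp}\colon\mathcal{O}_{\mathcal{N}}\to\mathcal{O}_{\mathcal{Z}_v}$ automatic in both parities. Once this is in place, the rest of the argument is a direct transcription of the classical embedding criterion.
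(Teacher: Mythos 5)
Your proposal follows essentially the same route as the paper's own sketch: classify length-$2$ sub-superschemes into a pair of distinct reduced points ($\mathbb{C}_p\times\mathbb{C}_q$) and fat points $\mathbb{C}_p[\varepsilon]$ with $\varepsilon$ even or odd, observe that the first type tests injectivity of $\Phi$ on points while the fat points test injectivity of $d\Phi$ on even and odd tangent vectors respectively, and then reduce to the classical embedding criterion of algebraic geometry. The additional details you supply (the explicit contrapositive computation and the graded Nakayama step) are consistent elaborations of what the paper leaves implicit in its sketch.
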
 
\begin{proof}[Sketch of proof] It is an immediate generalization of the analogous result in algebraic geometry (see for example \cite{Arrondo}, Proposition 2.4), taking into account that the possible supercommutative algebras $\mathcal{O}_{\mathcal{Z}}$ of dimension $2$ are of the form $\mathbb{C}_p\times \mathbb{C}_q$, with $p,q$ two distinct points and $\mathbb{C}_p$, $\mathbb{C}_q$ the skyscraper algebras equal to $\mathbb{C}$ over these points, or $\mathbb{C}_p[\varepsilon]$, with $\varepsilon$ of parity $0$ or $1$. The  composition $\mathcal{Z}\to \mathcal{M}\to\mathcal{N}$ allows one to take care of the injectivity of $\Phi$ on points in the first case and the injectivity of the tangent map $d\Phi$ on even or odd tangent vectors, in the other two cases.
\end{proof}
\noindent For the special case of embeddings into Grassmannians one can give more precise conditions.
\begin{prop}\label{prop:suffemb} Let $\Phi\colon \mathcal{M}\to G(a|b, V)$ be a map induced by some  epimorphism $V\otimes\mathcal{O}_{\mathcal{M}}\to\mathcal{E}$ of locally-free sheaves of $\mathcal{O}_\mani$-modules. Then $\Phi$ is an embedding if and only if for any sub-superscheme $\mathcal{Z}\subset\mathcal{M}$ of length $2$, the induced map $V\to\mathcal{E}\otimes_{\mathcal{O}_\mathcal{M}}\mathcal{O}_{\mathcal{Z}}$ has rank $>a|b$.  In all cases when $\mathcal{Z}$ has only one closed point $x\in \mathcal{M}$, that is when  $\mathcal{O}_{\mathcal{Z}}=\mathbb{C}_x[\varepsilon]$, a sufficient condition for the condition above to be satisfied is that $V\to \mathcal{E}/ \mathfrak{m}_x^2\mathcal{E}$ is surjective, with $\mathfrak{m}_x$ the maximal ideal of $x$ in $\mathcal{O}_{\mathcal{M},x}$.
\end{prop}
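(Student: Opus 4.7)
The plan is to reduce the statement to the embedding criterion of the preceding proposition and then translate the resulting scheme-theoretic condition into the linear-algebra condition on $V\to \mathcal{E}|_{\mathcal{Z}}$ by means of the universal property of $G=G(a|b,V)$.

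First I would invoke the preceding proposition: $\Phi$ is an embedding if and only if for every length-$2$ sub-superscheme $\mathcal{Z}\subset \mathcal{M}$ the composition $\Phi|_{\mathcal{Z}}\colon \mathcal{Z}\to G$ is itself a closed sub-superscheme. By the universal property recalled at the start of the section, $\Phi|_{\mathcal{Z}}$ is the unique morphism induced by the restricted surjection $V\otimes \mathcal{O}_{\mathcal{Z}}\twoheadrightarrow \mathcal{E}|_{\mathcal{Z}}$; in a local trivialisation it is represented by the $(a|b)\times(n|m)$ matrix whose columns are the images in $\mathcal{E}|_{\mathcal{Z}}$ of a basis of $V$, and after column reduction to the standard form (\ref{matrixstform}) the pullbacks of the affine coordinates $x_I,\xi_I$ on the big cell of $G$ through $\Phi|_{\mathcal{Z}}$ are exactly the non-trivial entries of the reduced matrix.

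Second, I would show that $\Phi|_{\mathcal{Z}}$ is a closed immersion if and only if these pulled-back coordinates generate $\mathcal{O}_{\mathcal{Z}}$ as a $\mathbb{C}$-superalgebra, and that this in turn is equivalent to the image of the $\mathbb{C}$-linear map $V\to \mathcal{E}|_{\mathcal{Z}}$ having super-rank strictly greater than $a|b$. The reason is that, since the map is $\mathcal{O}_{\mathcal{Z}}$-surjective onto a free rank-$a|b$ module, its image always has super-rank at least $a|b$, and equality holds precisely when all length-$2$ information is collapsed, i.e.\ when $\Phi|_{\mathcal{Z}}$ factors through a single closed $\mathbb{C}$-point of $G$. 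I would verify this by a short case analysis on the three flavours of length-$2$ sub-superschemes: two distinct reduced points $\mathcal{O}_{\mathcal{Z}}=\mathbb{C}_p\oplus \mathbb{C}_q$, where the rank inequality is equivalent to the quotients $V\twoheadrightarrow \mathcal{E}_p$ and $V\twoheadrightarrow \mathcal{E}_q$ having distinct kernels (hence distinct images in $G$); and $\mathcal{O}_{\mathcal{Z}}=\mathbb{C}_x[\varepsilon]$ with $\varepsilon$ of either parity, where the rank inequality is equivalent to the tangent direction corresponding to $\varepsilon$ being detected by $d\Phi$. For the sufficient condition in the one-point case, I would use that any such $\mathcal{Z}$ has $\mathcal{O}_{\mathcal{Z}}=\mathcal{O}_{\mathcal{M},x}/I$ with $\mathfrak{m}_x^2\subset I\subset \mathfrak{m}_x$, so $V\to \mathcal{E}|_{\mathcal{Z}}$ factors through the canonical quotient $\mathcal{E}/\mathfrak{m}_x^2\mathcal{E}\twoheadrightarrow \mathcal{E}|_{\mathcal{Z}}$; surjectivity of $V\to \mathcal{E}/\mathfrak{m}_x^2\mathcal{E}$ then forces surjectivity of $V\to \mathcal{E}|_{\mathcal{Z}}$, and the super-dimension of $\mathcal{E}|_{\mathcal{Z}}$, equal either to $(2a|2b)$ or $(a+b|a+b)$, is manifestly strictly greater than $a|b$.

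The main obstacle I anticipate is not a deep conceptual one but a careful bookkeeping issue: making the equivalence between ``$\Phi|_{\mathcal{Z}}$ is a closed immersion'' and ``the image of $V\to \mathcal{E}|_{\mathcal{Z}}$ has super-rank $>a|b$'' fully unambiguous across the three flavours of length-$2$ sub-superschemes, and being consistent with the parity conventions on ranks of $\mathbb{C}$-linear maps between $\mathbb{Z}/2$-graded vector spaces so that no hidden assumption on $(a,b)$ sneaks in. Once this dictionary is set up, both implications become essentially formal manipulations in the standard chart (\ref{matrixstform}).
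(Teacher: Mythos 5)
Your proposal is correct and follows essentially the same route as the paper's own (sketched) proof: both reduce to the length-$2$ criterion of the preceding proposition, identify the condition ``rank $>a|b$'' with non-constancy of the composition $\mathcal{Z}\to\mathcal{M}\to G(a|b,V)$, and obtain the sufficient condition from the factorization of $V\to\mathcal{E}\otimes_{\mathcal{O}_{\mathcal{M}}}\mathcal{O}_{\mathcal{Z}}$ through the canonical surjection $\mathcal{E}/\mathfrak{m}_x^2\mathcal{E}\to\mathcal{E}\otimes_{\mathcal{O}_{\mathcal{M}}}\mathcal{O}_{\mathcal{Z}}$ (the paper phrases this as factoring through the superscheme $\mathcal{M}_x^{(2)}$, using $\varepsilon^2=0$). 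Your case analysis over the three flavours of length-$2$ sub-superschemes and the chart bookkeeping in the standard form merely flesh out details the paper leaves implicit.
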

\begin{proof}[Sketch of proof] The necessary and sufficient condition says that the composition $\mathcal{Z}\to \mathcal{M}\to G(a|b, V)$ is not a {\em constant map}. To understand the last sufficient condition, one observes that, denoting $\mathcal{M}_x^{(2)}$ the affine sub-superscheme of $\mathcal{M}$ with support $\{x\}$ and associated algebra $\mathcal{O}_{\mathcal{M},x}/\mathfrak{m}_x^2$, one has a factorization $\mathcal{Z}\to \mathcal{M}_x^{(2)}\to \mathcal{M}\to G(a|b, V)$, because of the property $\varepsilon^2=0$. Then $V\to \mathcal{E}\otimes_{\mathcal{O}_\mathcal{M}}\mathcal{O}_{\mathcal{Z}}$ is the composition of $V\to \mathcal{E}/ \mathfrak{m}_x^2\mathcal{E}$ and the canonical  surjective morphism $\mathcal{E}/ \mathfrak{m}_x^2\mathcal{E}\to \mathcal{E}\otimes_{\mathcal{O}_\mathcal{M}}\mathcal{O}_{\mathcal{Z}}$.
\end{proof}
\noindent Now we can prove our main theorem \ref{embeddingth}. \vspace{8pt}\\
\noindent \emph{Proof of Theorem \ref{embeddingth}.}  By the preliminary results above, we have shown that for $k\gg 0$ the morphism $\Phi_k: \mani\to G(a|b, V)$ is globally defined, with $a|b$ the rank of the sheaf $Sym^k\mathcal{T}_\mani$. 

\noindent 
Note that $a|b$ can be computed from the formula (\ref{Skdecomp}) for the restriction of $Sym^k\mathcal{T}_\mani$ to $\proj 2$, where its even and odd summands are, respectively, 
$(Sym^k\mathcal{T}_\mani)_0=Sym^k\mathcal{T}_{\proj 2}\oplus  (Sym^{k-2} \mathcal{T}_{\proj 2}\otimes Sym^2 \mathcal{F}_\mani^\ast)$ and $(Sym^k\mathcal{T}_\mani)_1= Sym^{k-1} \mathcal{T}_{\proj 2}\otimes \mathcal{F}^\ast_\mani$, from which we get $a|b=2k|2k$. 

At the level of the reduced manifolds, $\Phi_k$ defines a morphism $\Phi_k |_{\proj 2}=(\phi_0,\phi_1) \colon \proj 2\longrightarrow G_0\times G_1$ which is associated to the surjections 
\bear
\xymatrix@R=1.5pt{
V_0\otimes\mathcal{O}_{\proj 2} \ar@{>>}[r] & H^0((\mathcal{E}_k)_0|_{\proj 2})\otimes\mathcal{O}_{\proj 2} \ar@{>>}[r] & (\mathcal{E}_k)_0|_{\proj 2} \nonumber \\
V_1\otimes\mathcal{O}_{\proj 2}\ar@{>>}[r] &  H^0((\mathcal{E}_k)_0|_{\proj 2})\otimes\mathcal{O}_{\proj 2}\ar@{>>}[r] &   (\mathcal{E}_k)_1|_{\proj 2}.\nonumber
}  
\eear
They define embeddings of $\proj 2$ into the ordinary Grassmannians $G_0 = G(2k; V_0)$ and $G_1 = G(2k; V_1)$ for $k\gg 0$ by well-known vanishing theorems in projective algebraic geometry.  This takes care of the injectivity of $\Phi_k$ at the point set level.\\
 By Proposition \ref{prop:suffemb}, the evaluation map $H^0(E)\to E$ defines an embedding into a Grassmannian if, when composed with the restriction $E\to E/ \mathfrak{m}_x^2E$, for any $x\in \proj 2$ and $\mathfrak{m}_x$ the maximal ideal in the stalk $\mathcal{O}_{\proj 2 , x}$, one gets a surjection 
$H^0(E)\to E/ \mathfrak{m}_x^2E=H^0(E/ \mathfrak{m}_x^2E)$. This is part of the exact sequence of cohomology associated to 
\bear
\xymatrix@R=1.5pt{
0\ar[r] & \mathfrak{m}_x^2 E \ar[r] & E \ar[r] & \slantone{E}{\mathfrak{m}_x^2E}\ar[r] & 0, \nonumber
}
\eear
so the surjection above is a consequence of $H^1(\mathfrak{m}_x^2E)=0$. In our case $E$ is either $E=\mathcal{E}_0\lfloor_{\proj 2}=Sym^k\mathcal{T}_{\proj 2}\oplus Sym^{k-2}\mathcal{T}_{\proj 2}(-3)$ or $E=\mathcal{E}_1|_{\proj 2}=Sym^{k-1}\mathcal{T}_{\proj 2}\otimes \mathcal{F}_\mani$, and the vanishing of $H^1(\mathfrak{m}_x^2E)=0$ can be shown in either case by means of the Euler sequence, by the same arguments as above.

\noindent
In conclusion, we have shown that $\Phi_k:\mani\to G(2k|2k, V)$ is injective at the level of geometrical points.

A similar criterion as in the ordinary algebraic geometry case applies to show the injectivity of the tangent map $d\Phi_k(x): \mathcal{T}_\mani(x)\to \mathcal{T}_{G(2k|2k,V)}(x)$ at any geometrical point $x\in \proj 2$. The maximal ideal of $x$ in 
$\mathcal{O}_{\mani, x}$ is $\mathfrak{M}_x \defeq \mathfrak{m}_x+\mathcal{J}_{\mani,x}$, and one can define the sub superscheme $\mathcal{V}_x$  of $\mani$ with reduced manifold $\{x\}$ and structure sheaf $\mathcal{O}_{\mani, x}/\mathfrak{M}_x^2$. Note that $(\mathfrak{M}_x^2)_0=\mathfrak{m}_x^2+\mathcal{J}_{\mani,x}^2$ 
and $(\mathfrak{M}_x)_1=\mathfrak{m}_x \mathcal{J}_{\mani,x}$, from which it follows $\mathcal{O}_{\mani, x}/\mathfrak{M}_x^2\cong \mathcal{O}_{\proj 2}/\mathfrak{m}_x^2\oplus (\mathcal{J}_{\mani,x}/\mathfrak{m}_x\mathcal{J}_{\mani,x})=\mathcal{O}_{\proj 2}/\mathfrak{m}_x^2\oplus \mathcal{F}(x)$. Note also that the tangent space of the 
superscheme $\mathcal{V}_x=(x,\mathcal{O}_{\mani, x}/\mathfrak{M}_x^2)$ is the same as the tangent space $\mathcal{T}_\mani(x)=(\mathfrak{M}_x/\mathfrak{M}_x^2)^\ast$. From these observations one gets the analogous result as in the classical case that the surjectivity of the 
restriction map $H^0(\mathcal{E}_k)\to H^0(\mathcal{E}_k\otimes \mathcal{O}_{\mani, x}/\mathfrak{M}_x^2 )=H^0(\mathcal{E}_k/\mathfrak{M}_x^2\mathcal{E}_k)$ ensures the injectivity of the tangent map $d\Phi_k$. Moreover observe that the superscheme embedding 
$\mathcal{V}_x\to \mani$ factorises through $\mani^{(2)}$, as $\mathcal{O}_{\mani, x}/\mathfrak{M}_x^2$ is also a $\mathcal{O}_{\mani}/\mathcal{J}_\mani^2$-module. Then the restriction map factorises as follows 
\bear
\xymatrix@R=1.5pt{
H^0(\mathcal{E}_k) \ar@{>>}[r]  & H^0(\mathcal{E}_k|_{\mani^{(2)}}) \ar[r]  &  H^0(\mathcal{E}_k/\mathfrak{M}_x^2\mathcal{E}_k), \nonumber
}
\eear
and we will show that the second map is surjective as well, using the fact that $\mathcal{E}_k\lfloor_{\mani^{(2)}}$ is a $\mathcal{O}_{\proj 2}$-module and by applying similar arguments as above, based on the vanishing of the higher cohomology of $H^i(\proj 2, \mathcal{G}(k))$, with $\mathcal{G}$ any fixed coherent sheaf, for $k\gg 0$. Indeed in our case we have $\mathcal{E}_k|_{\mani^{(2)}}\cong \mathcal{E}_k|_{\proj 2}\oplus (\mathcal{E}_k|_{\proj 2}\otimes \mathcal{F}_\mani)$ as a $\mathcal{O}_{\proj 2}$- module, so the decomposition (\ref{Skdecomp}) still applies to give the structure of $\mathcal{E}_k|_{\mani^{(2)}}$ as a $\mathcal{O}_{\proj 2}$- module. Setting $\overline{\mathfrak{M}_x^2}$ the ideal sheaf of $V_x$ in $\mani^{(2)}$, one has the exact sequence $0\to \overline{\mathfrak{M}_x^2}\mathcal{E}_k|_{\mani^{(2)}}\to \mathcal{E}_k|_{\mani^{(2)}}\to \mathcal{E}_k/\mathfrak{M}_x^2\mathcal{E}_k\to 0$, so we are left to prove $H^1(\overline{\mathfrak{M}_x^2}\mathcal{E}_k|_{\mani^{(2)}})=0$ for $k\gg 0$.  Now $\overline{\mathfrak{M}_x^2}\mathcal{E}_k|_{\mani^{(2)}}=
\overline{\mathfrak{M}_x^2}\mathcal{E}_k|_{\proj 2}\oplus (\overline{\mathfrak{M}_x^2}\mathcal{E}_k|_{\proj 2}\otimes \mathcal{F}_\mani)$ as a $\mathcal{O}_{\proj 2}$- module, therefore the decomposition (\ref{Skdecomp}) and the Euler sequences (\ref{SymmEuler}) apply to our case, showing that $H^1(\overline{\mathfrak{M}_x^2}\mathcal{E}_k|_{\mani^{(2)}})=0$ holds because of the vanishing of the higher cohomology groups $H^i(\proj 2, \mathcal{G}(k))$ for $k\gg 0$, with 
$\mathcal{G}=\overline{\mathfrak{M}_x^2}\otimes \mathcal{H}$, where $\mathcal{H}$ is any of the sheaves $\mathcal{O}_{\proj 2},\  \mathcal{F}^\ast_\mani,\  Sym^2\mathcal{F}^\ast_\mani,\  \mathcal{F}_\mani,\  \mathcal{F}_\mani\otimes\mathcal{F}^\ast_\mani,\  \mathcal{F}_\mani\otimes Sym^2\mathcal{F}^\ast_\mani$.
\begin{remark} Theorem \ref{embeddingth} is not effective, since it does not give any estimate on $k$ and on the super dimension of $V=H^0(\mathcal{E}_k)$ and hence it does not identify the target super Grassmannian of the embedding $\Phi_k$. In fact 
$k$ depends heavily on the choice of $\mathcal{F}_\mani$. However, it seems possible to calculate a uniform $k$  and $\dim V$ under some boundedness conditions on $\mathcal{F}_\mani$, such as $\mathcal{F}_\mani^\ast$ globally-generated, or $\mathcal{F}_\mani$ semistable.
\end{remark}
\begin{remark}
If one wants to generalize the result of Theorem \ref{embeddingth} to other non-projected supermanifolds, with reduced manifold $\manir$ with $\dim \manir\geq 2$, then the tangent sheaf $\mathcal{T}_{\manir}$ will not in general be ample (this happens only for $\manir$ a 
projective space, since projective spaces $\mathbb{P}_{\mathbb{C}}^n$ are the only projective varieties with ample tangent bundle, by a celebrated theorem of S. Mori, see \cite{Mori}) and therefore one faces the problem of finding a suitable ample locally-free sheaf of $\stsheafred$-modules $E$ on $\manir$ that {\em can be extended} to a locally-free sheaf $\mathcal{E}$ on $\mani$. This is a delicate problem that we will address in a future work.
\end{remark}
\noindent Before we go on to the next section we propose the following
\begin{problem} Find a \emph{fixed} super Grassmannian $G=G(2k|2k, V)$, i.e. a uniform $k$ and $\dim V$, so that $\mani=\proj2_{\omega} (\mathcal{F}_\mani)$ can be embedded in $G$, in the case when $\mathcal{F}^\ast_\mani$ is ample, or in the case when it is stable, with given 
$c_1(\mathcal{F}_\mani)=-3$ and $c_2(\mathcal{F}_\mani)=n$.
\end{problem}

\section{$\Pi$-projectivity versus non-$\Pi$-projectivity}

\noindent In Theorem \ref{thm:nonproj} we have seen that all the non-projected supermanifolds over $\proj 2$ are non-projective, in that they do not possess any even invertible sheaf. On the other hand in Theorem \ref{embeddingth} we have shown that they can all be embedded in some super Grassmannian, by the use of suitable locally-free sheaves of $\stsheaf$-modules on these supermanifolds. 

 In \cite{Manin}, Manin suggested that the notion of invertible sheaves might no longer be fundamental in supergeometry and he proposed instead the notion of $\Pi$-invertible sheaves to be the right one to employ, together with a related 
notion of $\Pi$-projective spaces as suitable embedding spaces, which would be nice since $\Pi$-projective spaces are simpler supermanifolds endowed with a similar universal property as projective spaces, than more general super Grassmannians. We defer to subsection \ref{subsection:Pi} for the precise definitions of $\Pi$-projective spaces and $\Pi$-invertible sheaves.
But, assuming those notions are already set up, the  following natural question arises.
\begin{question} Can any non-projected supermanifold of the form $\proj {2}_\omega (\mathcal{F}_\mani)$  be embedded in some $\Pi$-projective space?
\end{question}
\noindent In the present section we will show that in general the answer to the question above is {\em negative}, and indeed the possibility of embedding non-projected supermanifolds of the form $\proj {2}_\omega (\mathcal{F}_\mani)$ into $\Pi$-projective spaces very strongly depends on the choice of $\mathcal{F}_{\mani}$. We will do so by considering the following two extreme cases with $\mathcal{F}_\mani$ a  homogeneous vector bundle on $\mathbb{P}^2$. 
\begin{itemize}
\item {\bf decomposable}:  $\mathcal{F}_\mani \defeq \Pi \mathcal{O}_{\proj 2} (-1) \oplus \Pi \mathcal{O}_{\proj 2} (-2).$ 
\item {\bf non-decomposable}: $\mathcal{F}_\mani \defeq \Pi \Omega^1_{\proj 2}$.
\end{itemize}

\noindent We will prove  that the  first supermanifold above is \emph{not} $\Pi$-projective, that is it cannot be 
embedded in a $\Pi$-projective space. Therefore, under these circumstances, the notion of $\Pi$-invertible sheaves does not prove useful to get further geometrical knowledge of the supermanifold. 
\vskip2mm
On the other hand, referring to previous work of one of the authors, we will recall that the non-projected supermanifold with $\mathcal{F}_\mani \defeq \Pi \Omega^1_{\proj 2}$ is the $\Pi$-projective plane itself, which of course answers the question of existence of an embedding in a $\Pi$-projective space in the affirmative trivial way, in this case. 
\vskip2mm
First of all we give a more detailed description of the two supermanifolds introduced above. Although not strictly necessary for the remainder of this paper, we will provide for each of the two non-projected supermanifolds an explicit atlas and transition functions, hoping in this way to give the reader a more concrete perception of the objects at hand.

\subsection{Decomposable Sheaf: $\mathcal{F}_\mani = \Pi \mathcal{O}_{\proj 2} (-1) \oplus \Pi \mathcal{O}_{\proj 2} (-2)$}  We have the following result.
\begin{prop}[Transition functions (1)] Let $\proj 2_\omega(\mathcal{F}_\mani)$ be the non-projected supermanifold with
$\mathcal{F}_\mani = \Pi \mathcal{O}_{\proj 2} (-1) \oplus \Pi \mathcal{O}_{\proj 2} (-2)$. Then, its transition functions take the following form:
\begin{align}\label{trans1}
& \mathcal{U}_{0} \cap \mathcal{U}_{1} : \qquad z_{10} = \frac{1}{z_{11}}, \qquad \quad z_{20} = \frac{z_{21}}{z_{11}} + \lambda \frac{\theta_{11} \theta_{21}}{(z_{11})^2};  & \qquad \theta_{10} = \frac{\theta_{11}}{z_{11}}, \qquad \theta_{20}  = \frac{\theta_{21}}
{(z_{11})^2};\nonumber \\  
& \mathcal{U}_{1} \cap \mathcal{U}_{2} : \qquad  z_{11} = \frac{z_{12}}{z_{22}} + \lambda \frac{\theta_{12} \theta_{22}}{(z_{22})^2}, \quad \qquad  z_{21} = \frac{1}{z_{22}}; & \qquad  \theta_{11} = \frac{\theta_{12}}{z_{22}}, \qquad \theta_{21}  = \frac{\theta_{22}}
{(z_{22})^2}; \nonumber\\  
& \mathcal{U}_{2} \cap \mathcal{U}_{0} : \qquad  z_{12} = \frac{1}{z_{20}}, \qquad  \quad z_{22} = \frac{z_{10}}{z_{20}} + \lambda \frac{\theta_{10} \theta_{20}}{(z_{20})^2}; & \qquad  \theta_{12} = \frac{\theta_{10}}{z_{10}},  \qquad \theta_{22} = 
\frac{\theta_{20}}{(z_{10})^2}.  
\end{align} 
\end{prop}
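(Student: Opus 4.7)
The proposition is essentially a direct specialisation of Theorem \ref{gentransition}, where all the freedom in the construction has been fixed by the choice of fermionic sheaf $\mathcal{F}_\mani=\Pi\mathcal{O}_{\proj 2}(-1)\oplus\Pi\mathcal{O}_{\proj 2}(-2)$. The plan is therefore to split the transitions into a bosonic part and a fermionic part, read the fermionic part off the direct sum decomposition of $\mathcal{F}_\mani$, and read the bosonic $\lambda$-corrections off the cocycle representatives already computed in (\ref{omegas}).

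First, I would exploit the fact that the decomposition of $\mathcal{F}_\mani$ is canonical to choose on each $\mathcal{U}_i$ a \emph{compatible} basis $\theta_{1i},\theta_{2i}$, with $\theta_{1i}$ the local generator $1/X_i$ of $\Pi\mathcal{O}_{\proj 2}(-1)$ and $\theta_{2i}$ the local generator $1/X_i^{2}$ of $\Pi\mathcal{O}_{\proj 2}(-2)$. With respect to such adapted bases the matrix $M$ appearing in (\ref{transtheta}) is diagonal, with diagonal entries the transition functions of the two line summands. From the standard identities $1/X_i=(X_j/X_i)(1/X_j)$ and $1/X_i^2=(X_j/X_i)^2(1/X_j^2)$ one then immediately reads off, for instance on $\mathcal{U}_0\cap\mathcal{U}_1$, the relations $\theta_{10}=\theta_{11}/z_{11}$ and $\theta_{20}=\theta_{21}/z_{11}^2$, and similarly on the other two overlaps. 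A sanity check is that $\det M=1/z_{11}^3$, which is coherent with (\ref{transtheta2}) and with the normalisation $Sym^2\mathcal{F}_\mani\cong\mathcal{O}_{\proj 2}(-3)$.

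Second, for the bosonic coordinates, I would invoke Theorem \ref{gentransition} directly: the mod $\mathcal{J}_\mani^2$ parts of the transitions are the classical ones (\ref{transfzeta2}), and the nilpotent corrections are obtained by evaluating the \v{C}ech cocycle $\{\lambda\omega_{01},\lambda\omega_{12},\lambda\omega_{20}\}$ of (\ref{omegas}) on the bosonic coordinate functions $z_{\ell i}$. This produces the corrections $\lambda\theta_{11}\theta_{21}/z_{11}^{2}$ to $z_{20}$ on $\mathcal{U}_0\cap\mathcal{U}_1$, $\lambda\theta_{12}\theta_{22}/z_{22}^{2}$ to $z_{11}$ on $\mathcal{U}_1\cap\mathcal{U}_2$, and $\lambda\theta_{10}\theta_{20}/z_{20}^{2}$ to $z_{22}$ on $\mathcal{U}_2\cap\mathcal{U}_0$, matching exactly the formulas in (\ref{trans1}).

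Combining the fermionic and bosonic pieces on each of the three pairwise intersections produces the three lines of (\ref{trans1}). There is essentially no hard step: the only thing one really needs to verify is compatibility on the triple overlap $\mathcal{U}_0\cap\mathcal{U}_1\cap\mathcal{U}_2$, and this is automatic because the bosonic corrections arise from a genuine \v{C}ech cocycle (as established in the proof of Theorem \ref{gentransition}) and the fermionic transitions come from the cocycle of an honest rank two vector bundle on $\proj 2$. The mildest bookkeeping issue is checking the diagonal entries of $M$ on the three overlaps so that the products $\theta_{1i}\theta_{2i}$ transform as in (\ref{transtheta2}), but this is immediate from the uniform ansatz $\theta_{1i}\leftrightarrow 1/X_i$, $\theta_{2i}\leftrightarrow 1/X_i^{2}$.
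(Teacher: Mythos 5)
Your proposal is correct and takes essentially the same approach as the paper, whose entire proof consists of invoking Theorem \ref{gentransition} together with the diagonal transition matrix $M=\operatorname{diag}(1/z_{11},\,1/z_{11}^{2})$ determined by the direct sum decomposition of $\mathcal{F}_\mani$ --- exactly your adapted-basis computation. Incidentally, your derivation gives $\theta_{12}=\theta_{10}/z_{20}$ and $\theta_{22}=\theta_{20}/(z_{20})^{2}$ on $\mathcal{U}_{2}\cap\mathcal{U}_{0}$, which is the correct form; the occurrences of $z_{10}$ in the third line of (\ref{trans1}) are typographical slips.
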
 

\begin{proof} It follows immediately from Theorem \ref{gentransition}, taking into account the transition matrices for $\mathcal{F}_\mani$, that have the form $M=\left(\begin{array}{ll} \frac{1}{z_{01}} &0\\ 0& \frac{1}{z_{01}^2}\end{array}\right)$ on 
$\mathcal{U}_{0} \cap \mathcal{U}_{1}$ and similar forms on the other two intersections of the fundamental open sets.  \end{proof}

\subsection{Non-Decomposable Sheaf: $\mathcal{F}_\mani = \Pi \Omega^1_{\proj 2} $}

If we take $\Pi \Omega^1_{\proj 2} $ to be the fermionic sheaf of the supermanifold $\proj 2_\omega$, then we let $\theta_{10}$, $\theta_{20}$ transform as $dz_{10}$ and $dz_{20}$, respectively, obtaining the transformations 
\begin{align} \label{transOmega}
\mathcal{U}_{0} \cap \mathcal{U}_{1} : \qquad \quad & \theta_{10} = - \frac{\theta_{11}}{(z_{11})^2}, \qquad \quad \theta_{20} = - \frac{z_{21}}{(z_{11})^2} \theta_{11} + \frac{\theta_{21}}{z_{11}}; \nonumber \\
\mathcal{U}_{2} \cap \mathcal{U}_{0} : \qquad \quad & \theta_{12} = - \frac{\theta_{20}}{(z_{20})^2}, \qquad \quad  \theta_{22} =  \frac{\theta_{10}}{z_{10}} - \frac{z_{10}}{(z_{20})^2}\theta_{20}; \nonumber \\ 
\mathcal{U}_{1} \cap \mathcal{U}_{2} : \qquad \quad  & \theta_{11} = - \frac{z_{12}}{(z_{22})^2} \theta_{22} + \frac{\theta_{12}}{z_{22}}, \qquad \quad \theta_{21} = - \frac{\theta_{22}}{(z_{22})^2}.  
\end{align}
Just like above, we now look for the complete form of the transition functions. By Theorem \ref{gentransition}, we have the following result.
\begin{prop}[Transition functions (2)] Let $\proj 2_\omega$ be the non-projected supermanifold with $\mathcal{F}_\mani = \Pi \Omega^1_{\proj 2}$. Then, its transition functions take the following form:
\begin{align} \label{trans2}
& \mathcal{U}_{0} \cap \mathcal{U}_{1} : \qquad z_{10} = \frac{1}{z_{11}}, \quad z_{20} = \frac{z_{21}}{z_{11}} + \lambda \frac{\theta_{11} \theta_{21}}{(z_{11})^2}; & \quad \theta_{10} = - \frac{\theta_{11}}{(z_{11})^2},  \quad \theta_{20} = - \frac{z_{21}}
{(z_{11})^2} \theta_{11} + \frac{\theta_{21}}{z_{11}}; \nonumber \\  
& \mathcal{U}_{1} \cap \mathcal{U}_{2} : \qquad  z_{11} = \frac{z_{12}}{z_{22}} - \lambda \frac{\theta_{12} \theta_{22}}{(z_{22})^2}, \quad  z_{21} = \frac{1}{z_{22}} ;  & \quad  \theta_{12} = - \frac{\theta_{20}}{(z_{20})^2}, \quad  \theta_{22} =  \frac{\theta_{10}}
{z_{10}} - \frac{z_{10}}{(z_{20})^2}\theta_{20};\nonumber  \\  
& \mathcal{U}_{2} \cap \mathcal{U}_{0} : \qquad  z_{12} = \frac{1}{z_{20}}, \quad z_{22} = \frac{z_{10}}{z_{20}} - \lambda \frac{\theta_{10} \theta_{20}}{(z_{20})^2}; & \quad \theta_{11} = - \frac{z_{12}}{(z_{22})^2} \theta_{22} + \frac{\theta_{12}}{z_{22}}, \quad 
\theta_{21} = - \frac{\theta_{22}}{(z_{22})^2}. 
\end{align} 
\end{prop}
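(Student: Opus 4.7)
The claim follows by specialising Theorem \ref{gentransition} to the case $\mathcal{F}_\mani=\Pi\Omega^1_{\proj 2}$. The fermionic part of the transformation has already been derived: choosing the natural local bases $\theta_{1i}\leftrightarrow dz_{1i}$, $\theta_{2i}\leftrightarrow dz_{2i}$ of $\Pi\Omega^1_{\proj 2}$ on each $\mathcal{U}_i$, the transition matrices $M$ between such bases are nothing but the Jacobians of the bosonic coordinate changes, and this immediately reproduces (\ref{transOmega}). In particular, on $\mathcal{U}_0\cap\mathcal{U}_1$ one reads off $M=\bigl(\begin{smallmatrix}-1/z_{11}^2 & 0\\ -z_{21}/z_{11}^2 & 1/z_{11}\end{smallmatrix}\bigr)$, with $\det M=-1/z_{11}^3$, which is consistent with $\det M$ being a transition cocycle for $Sym^2\mathcal{F}_\mani\cong K_{\proj 2}\cong \mathcal{O}_{\proj 2}(-3)$.

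The bosonic part then comes from the template transformation of Theorem \ref{gentransition}, but care is required about the sign of the $\lambda$-term. The point is that in the proof of Theorem \ref{gentransition} the generator of $H^1(\mathcal{T}_{\proj 2}(-3))$ was expressed in the normalised basis $\theta_{1i}\theta_{2i}=(X_j/X_i)^3\theta_{1j}\theta_{2j}$; for $\mathcal{F}_\mani=\Pi\Omega^1_{\proj 2}$ the corresponding products of the "differential" bases may differ from this normalisation by a constant sign on each intersection. To pin down these signs I would compute $\theta_{1i}\theta_{2i}$ explicitly from (\ref{transOmega}) on each of the three pairwise intersections and compare with the reference basis. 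For instance on $\mathcal{U}_0\cap\mathcal{U}_1$ one finds $\theta_{10}\theta_{20}=-z_{11}^{-3}\theta_{11}\theta_{21}$, yielding the sign that ultimately appears in the $\lambda$-coefficient of the transformation of $z_{20}$.

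With the matrices $M$ and the three signs in hand, I would plug everything into the template of Theorem \ref{gentransition} on each of $\mathcal{U}_0\cap\mathcal{U}_1$, $\mathcal{U}_1\cap\mathcal{U}_2$, $\mathcal{U}_2\cap\mathcal{U}_0$, and rewrite each cocycle $\omega_{ij}$ of (\ref{omegas}) in the appropriate local coordinates. This produces exactly the six bosonic and six fermionic transformations collected in (\ref{trans2}). The only non-routine step is the bookkeeping of the three signs discussed above, and as a sanity check I would verify that the resulting assembly satisfies the triple-intersection cocycle relation on $\mathcal{U}_0\cap\mathcal{U}_1\cap\mathcal{U}_2$, in complete analogy with the computation performed at the end of the proof of Theorem \ref{gentransition}.
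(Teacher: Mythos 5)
Your proposal is correct and takes essentially the same route as the paper, whose proof simply invokes Theorem \ref{gentransition} together with the fermionic transition functions (\ref{transOmega}). Your extra sign bookkeeping (comparing the products $\theta_{1i}\theta_{2i}$ of the differential bases against the normalisation (\ref{transtheta2}) on each intersection) is precisely the detail the paper leaves implicit, and it correctly accounts for the relative signs of the $\lambda$-terms in (\ref{trans2}), the overall sign being absorbable into the choice of representative $\lambda$.
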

\begin{proof} Again, it follows immediately from Theorem \ref{gentransition}, taking into account the transition functions for $\mathcal{F}_\mani$ provided by (\ref{transOmega}).  \end{proof}

\subsection{$\Pi$-projective spaces and $\Pi$-invertible sheaves}\label{subsection:Pi}
As $\Pi$-projective geometry is not a central topic of this paper, we refer to the literature for an introduction to the subject,  in particular \cite{Manin} Chapter 5, \S 6.4  and \cite{ManinNC} Chapter 2, \S 8.5 and \S 8.11. In what follows we recall the basic definitions of $\Pi$-projective spaces and $\Pi$-invertible sheaves.
We start with the following notions.

\begin{definition}[$\Pi$-symmetric modules] Let $M$ be a supercommutative free $A$-module such that $M=A^n \oplus \Pi A^n $. Let $p_\Pi$ denote the odd involution $p_\Pi:M\to \Pi M$ that exchanges the even and odd base elements. Then we say that a super submodule $S\subset M$ is $\Pi$-symmetric if it is stable under the action of $p_\Pi.$   In particular $M$ itself is $\Pi$-symmetric.
\end{definition}
\begin{definition}[$\Pi$-symmetric locally-free sheaves] Let $\mathcal{G}$ be a locally-free sheaf of $\stsheaf$-modules of rank $n|n$ on a supermanifold $\mani$. We say that $\mathcal{G}$ is a $\Pi$-symmetric locally-free sheaf if it comes together with a $\Pi$-symmetry, that is an {\em odd} involution $p_{\Pi} : \mathcal{G} \rightarrow \Pi \mathcal{G}$ such 
that $p_{\Pi }^2 = id$. 
\end{definition}
\noindent In particular on every open set $\mathcal{U} \subset |\mani |$ such that $M=\mathcal{G}(\mathcal{U})$ is free, it is a $\Pi$-symmetric $A=\mathcal{O}_\mani(\mathcal{U})$-module as in the definition above. Moreover $H^0(\mani,\mathcal{G})$ is a $\Pi$-symmetric $\mathbb{C}$-superspace and one can define the subspace of $p_\Pi$-invariant global sections $H^0_\Pi(\mani,\mathcal{G})\subset H^0(\mani,\mathcal{G})$, which is of course $\Pi$-symmetric. 

\begin{definition}[$\Pi$-invertible sheaves] A $\Pi$-invertible sheaf 
$\mathcal{G}_\Pi$ is a $\Pi$-symmetric locally-free sheaf of rank $1|1$. In other words, it is a locally-free sheaf of $\mathcal{O}_\mani$-modules of rank $1|1$ together with an odd involution $p_{\Pi} : \mathcal{G}_\Pi \rightarrow \Pi \mathcal{G}_{\Pi}$.
\end{definition} 

\noindent Locally, on an open set $\mathcal{U} \subset |\mani |$, the involution $p_\Pi$
exchanges the even and odd components of the sheaf $\mathcal{G}_\Pi |_{\mathcal{U}} \cong \mathcal{O}_{\mani} (\mathcal{U}) \oplus \Pi \mathcal{O}_{\mani} ({\mathcal{U}})$. 
\begin{definition}[See \cite{ManinNC} Chapter 2, \S 8.5] Let $T=\mathbb{C}^{n+1|n+1}$ be the super-vector space endowed with the odd involution $p\colon T\to T$, with $p^2=\operatorname{id}$. Then the $\Pi$-projective space $\mathbb{P}_\Pi^{n}$ is the Grassmannian $G\Pi(1|1, T)$ of the $p$-invariant $1|1$-subspaces of $T$. It has a tautological $1|1$ locally free sheaf $\mathcal{O}_\Pi(1)$ that is also endowed with a odd involution $p$.
\end{definition}
In \cite{ManinNC} Chapter 2, \S 8.5 it is also observed that to give a morphism $f: \mani \to \mathbb{P}_\Pi^{n}$ is equivalent to give a $\Pi$-invertible sheaf $\mathcal{G}_\Pi$ on $\mani$ and a sheaf epimorphism  $T^\ast\otimes\mani\to \mathcal{G}_\Pi$, compatible with the two involutions $p$ on $T$ and $p_\Pi$ on $\mathcal{G}_\Pi$ and such that $\mathcal{G}_\Pi=f^\ast\mathcal{O}_\Pi(1)$. We observe that in this case the space of $p$-invariant elements of $T^\ast$ produce $p_\Pi$- invariant sections of $\mathcal{G}_\Pi$ that do not have common zeros on $\mani$, and conversely, as Manin observes in \cite{ManinNC} Chapter 2, \S 8.5, any choice of $n+1$ such sections gives rise to a morphism to some $\mathbb{P}^n_\Pi$. In particular one can easily see that if $H^0_\Pi(\mathcal{G}_\Pi)$ is a $1$-dimensional vector space, then the map $f$ is constant. 

As noted by Manin in \cite{ManinNC} Chapter 2, \S 8.11, giving an odd involution on a rank $1|1$ sheaf corresponds to reduce its structure group, the whole super Lie group $GL(1|1, \mathcal{O}_\mani)$, to the non-commutative multiplicative group 
$\mathbb{G}^{1|1}_m (\mathcal{O}_\mani)$. Likewise, the set of isomorphism classes of $\Pi$-invertible sheaves on a certain supermanifold $\mani$, denoted with $\mbox{Pic}_\Pi (\mani)$ by similarity with the usual Picard group, can be identified with 
the \emph{pointed set} $H^1(\mathbb{G}^{1|1} (\mathcal{O}_\mani))$. \\
The embedding $\mathbb{G}_m \hookrightarrow \mathbb{G}^{1|1}_m$, induces a map as follows
\begin{align}
i : \mbox{Pic}_0 (\mani)  \longrightarrow \mbox{Pic}_\Pi (\mani), \qquad\qquad\ \mathcal{L}_\mani  \longmapsto \mathcal{L}_\mani\oplus \Pi \mathcal{L}_\mani,
\end{align} 
where $\mathcal{L}_\mani$ is a locally-free sheaf of $\mathcal{O}_\mani$-modules of rank $1|0$ (generalisation of usual line bundles, as above) and the $\Pi$-invertible sheaf $\mathcal{L}_\mani\oplus \Pi \mathcal{L}_\mani$ is called the \emph{interchange 
of summands}, to stress that it comes endowed with the morphism $p_\Pi$. We say that a $\Pi$-invertible sheaf \emph{splits} if it is isomorphic to the interchange of summands $\mathcal{L}_\mani \oplus \Pi \mathcal{L}_\mani$. Analogously, we might have said that 
a $\Pi$-invertible sheaf splits if its structure group $\mathbb{G}^{1|1}_m$ can in turn be reduced to the usual $\mathbb{G}_m.$ \\
The injective map $\mathbb{G}_m \rightarrow \mathbb{G}^{1|1}_m$ fits into an exact sequence as follows (see again \cite{ManinNC})
\bear
\xymatrix@R=1.5pt{
1 \ar[r] & \mathbb{G}_m \ar[r] & \mathbb{G}_{m}^{1|1}  \ar[r] & \mathbb{G}^{0|1}_a \ar[r] &  0, 
}  
\eear
that is useful to study whenever a $\Pi$-invertible sheaf splits. Indeed, as $\mathbb{G}_m$ is central in $\mathbb{G}^{1|1}_m$, the sequence of pointed sets corresponding to the first \v{C}ech cohomology groups associated to short exact sequence above 
can be further extended to $H^2(\mathbb{G}_m(\mathcal{O}_\mani)) = H^2 (\mathcal{O}_{\mani, 0}^\ast)$, giving 
\bear
\xymatrix@R=1.5pt{
\cdots \ar[r] & \mbox{Pic}_0 (\mani) \ar[r] & \mbox{Pic}_{\Pi} (\mani) \ar[r] & H^1 (\mathcal{O}_{\mani, 1}) \ar[r]^\delta &  H^2 (\mathcal{O}_{\mani, 0}^\ast).
}  
\eear 
Clearly, the obstruction to splitting of $\Pi$-invertible sheaves for a supermanifolds lies in the image of the map $\mbox{Pic}_{\Pi} (\mani) \rightarrow H^1 (\mathcal{O}_{\mani, 1}) $ or, analogously, by exactness, in the kernel of 
$H^1 (\mathcal{O}_{\mani, 1}) \rightarrow H^2 (\mathcal{O}_{\mani, 0}^\ast)$. \\
We apply the considerations above to obtain the following result.
\begin{theorem}\label{thm:nonPiproj}
Let $\mani=\proj 2_\omega (\mathcal{F}_\mani)$ with fermionic sheaf $\mathcal{F}_\mani = \Pi \mathcal{O}_{\proj 2} (-1) \oplus \Pi \mathcal{O}_{\proj 2} (-2)$. Then $\mbox{\emph{Pic}}_{\Pi} (\mani)$ is just a point, representing the trivial $\Pi$-invertible sheaf 
$\mathcal{O}_\mani\oplus\Pi\mathcal{O}_\mani$. In particular $\mani$ cannot be embedded in a $\Pi$-projective space.
\end{theorem}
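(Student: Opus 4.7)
The plan is to exploit the long exact sequence of pointed sets already written down in the excerpt, namely
\[
\cdots \longrightarrow \mbox{Pic}_0(\mani) \longrightarrow \mbox{Pic}_\Pi(\mani) \longrightarrow H^1(\mathcal{O}_{\mani,1}) \stackrel{\delta}{\longrightarrow} H^2(\mathcal{O}_{\mani,0}^\ast),
\]
and pinch $\mbox{Pic}_\Pi(\mani)$ between two vanishing groups at the ends. The left one is handled for free by Theorem \ref{thm:nonproj}: $\mbox{Pic}_0(\mani)=0$ for every $\mani = \proj 2_\omega(\mathcal{F}_\mani)$. So the map $\mbox{Pic}_\Pi(\mani) \hookrightarrow H^1(\mathcal{O}_{\mani,1})$ is injective (as a map of pointed sets), and it suffices to show $H^1(\mathcal{O}_{\mani,1})=0$.

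For this, I would use the remark in the introduction: since the odd dimension is $2$, one has $\mathcal{J}_\mani^3=0$ and therefore the odd part of the structure sheaf is $\mathcal{O}_{\mani,1}=(\mathcal{J}_\mani)_1\cong \mathcal{F}_\mani$. For our specific choice $\mathcal{F}_\mani=\Pi\mathcal{O}_{\proj 2}(-1)\oplus \Pi\mathcal{O}_{\proj 2}(-2)$ we then get
\[
H^1(\mani,\mathcal{O}_{\mani,1})\cong H^1(\proj 2,\mathcal{O}_{\proj 2}(-1))\oplus H^1(\proj 2,\mathcal{O}_{\proj 2}(-2))=0,
\]
by the standard vanishing of intermediate cohomology of line bundles on $\proj 2$. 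Plugged into the exact sequence, this forces $\mbox{Pic}_\Pi(\mani)=\{\ast\}$, represented precisely by the trivial $\Pi$-invertible sheaf $\mathcal{O}_\mani\oplus \Pi\mathcal{O}_\mani$.

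For the embedding statement, by the description of maps $\mani\to \proj{n}_\Pi$ recalled from \cite{ManinNC}, any such morphism is given by a $\Pi$-invertible sheaf $\mathcal{G}_\Pi$ on $\mani$ together with $n+1$ basepoint-free $p_\Pi$-invariant sections. Since the only $\mathcal{G}_\Pi$ available is the trivial one, I would compute $H^0_\Pi(\mani,\mathcal{O}_\mani\oplus\Pi\mathcal{O}_\mani)$ and show it is one-dimensional. Filtering $\mathcal{O}_\mani$ by $\mathcal{J}_\mani\supset\mathcal{J}_\mani^2\supset 0$, the successive quotients are $\mathcal{O}_{\proj 2}$, $\mathcal{F}_\mani=\Pi\mathcal{O}_{\proj 2}(-1)\oplus\Pi\mathcal{O}_{\proj 2}(-2)$ and $Sym^2\mathcal{F}_\mani=\mathcal{O}_{\proj 2}(-3)$; only the first has nontrivial $H^0$, and that $H^0$ is $\mathbb{C}$. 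Hence $H^0(\mani,\mathcal{O}_\mani)=\mathbb{C}$, and the $p_\Pi$-invariant global sections of $\mathcal{O}_\mani\oplus\Pi\mathcal{O}_\mani$ form a $1$-dimensional space. Therefore any morphism $\mani\to\proj{n}_\Pi$ is constant, and no embedding exists.

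The main technical step is the identification $\mathcal{O}_{\mani,1}\cong \mathcal{F}_\mani$, which is specific to odd dimension $2$ and was essentially isolated in the introductory remark; everything else reduces to classical vanishings on $\proj 2$ together with the exact sequence machinery already set up in the preceding subsection. I expect no genuine obstacle, since the proof is really a two-line chase once the correct identifications are made; the only thing to keep in mind is that the sequence involved is one of pointed sets, so the injectivity at $\mbox{Pic}_\Pi(\mani)$ must be deduced from exactness at a pointed-set level (triviality of $\mbox{Pic}_0(\mani)$), rather than appealing to abelian group arguments.
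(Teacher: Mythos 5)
Your proposal is correct and in its main computation follows essentially the same route as the paper: the identification $\mathcal{O}_{\mani,1}\cong\mathcal{F}_\mani$ (valid since the odd dimension is $2$), the vanishing $H^1(\mathcal{F}_\mani)\cong H^1(\mathcal{O}_{\proj 2}(-1))\oplus H^1(\mathcal{O}_{\proj 2}(-2))=0$, and the exact sequence of pointed sets combined with $\mbox{Pic}_0(\mani)=0$ from Theorem \ref{thm:nonproj} are exactly the paper's ingredients. One logical point should be repaired, although it does not affect your conclusion: in a sequence of pointed sets, triviality of $\mbox{Pic}_0(\mani)$ does \emph{not} yield injectivity of $\mbox{Pic}_\Pi(\mani)\to H^1(\mathcal{O}_{\mani,1})$; exactness only identifies the fibre over the base point with the image of $\mbox{Pic}_0(\mani)$, and a map of pointed sets with trivial fibre over the base point need not be injective. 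The correct chase runs in the other direction, as in the paper: since $H^1(\mathcal{O}_{\mani,1})=0$, \emph{every} class in $\mbox{Pic}_\Pi(\mani)$ maps to the base point, hence by exactness lies in the image of $\mbox{Pic}_0(\mani)=0$, so $\mbox{Pic}_0(\mani)\to\mbox{Pic}_\Pi(\mani)$ is surjective and $\mbox{Pic}_\Pi(\mani)$ is a single point; your closing remark thus gets the pointed-set subtlety backwards (it is surjectivity, not injectivity, that exactness delivers here). Finally, your treatment of the non-embeddability is actually more complete than the paper's, which dismisses it with ``clearly'': you compute $H^0(\mathcal{O}_\mani)=\mathbb{C}$ via the filtration by powers of $\mathcal{J}_\mani$, conclude that $H^0_\Pi(\mathcal{O}_\mani\oplus\Pi\mathcal{O}_\mani)$ is one-dimensional, and invoke the constancy criterion recalled from \cite{ManinNC} in subsection \ref{subsection:Pi}; this is precisely the argument the paper leaves implicit, and writing it out is a genuine improvement.
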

\begin{proof}
Remembering that $\mathcal{F}_\mani \cong \mathcal{O}_{\mani, 1}$, as the supermanifold has dimension $2|2$, one easily compute that 
\bear
H^1  (\mathcal{O}_{\mani, 1}  ) \cong H^1 (\mathcal{F}_\mani) \cong H^1 (\mathcal{O}_{\proj 2} (-1)) \oplus H^1 (\mathcal{O}_{\proj 2} (-2)) = 0.
\eear
This tells us that we have a surjection
\bear
\mbox{Pic}_{0} (\mani) \longrightarrow \mbox{Pic}_{\Pi} (\mani) \longrightarrow 0. 
\eear
and therefore all the $\Pi$-invertible sheaves will be of the form $\mathcal{L}_\mani \oplus \Pi \mathcal{L}_\mani$. On the other hand we do already know that the even Picard group of $\mani$ is trivial, and the only invertible sheaf of rank $1|0$ is actually the structure sheaf. This tells us that the only $\Pi$-invertible sheaf that can be defined on $\mani$ endowed with a decomposable fermionic sheaf as above is given by 
$\mathcal{G}_\Pi \defeq \mathcal{O}_{\mani} \oplus \Pi \mathcal{O}_{\mani}.$ We have 
\bear
\mbox{Pic}_\Pi (\mani ) = \{\mathcal{O}_{\mani} \oplus \Pi \mathcal{O}_{\mani} \},
\eear
that is the pointed-set $\mbox{Pic}_{\Pi } (\mani)$ is given by its base point only. 
Clearly, as there are no non-trivial $\Pi$-invertible sheaves there is no hope for $\mani$ to be embedded in a $\Pi$-projective space. 
\end{proof}
The scenario is much different when one considers $\proj 2_\omega (\mathcal{F}_\mani)$ endowed with the non-decomposable fermionic sheaf $\mathcal{F}_\mani = \Pi \Omega^1_{\proj 2}$.

The $\Pi$-projective plane $\proj {2}_\Pi$, that describes $1|1$-dimensional $\Pi$-symmetric subspaces of $\mathbb{C}^{3|3}$, is covered by three affine charts, whose coordinates in the super big-cell notation, as shown in \cite{Manin} Chapter 5, \S 6.4, are given by
\bear
\mathcal{Z}_{\mathcal{U}_0} = \left ( \begin{array}{ccc||ccc}
1 & z_{10} & z_{20} & 0 \; & \theta_{10} & \theta_{20} \\
\hline \hline
0 & - \theta_{10} & - \theta_{20} & 1 \; & z_{10} & z_{20} \\
\end{array}
\right ) \qquad 
\mathcal{Z}_{\mathcal{U}_1} = \left ( \begin{array}{ccc||ccc}
z_{11} & 1 & z_{21} & \theta_{11} & 0  \;& \theta_{21} \\
\hline \hline
- \theta_{11} & 0 & - \theta_{21} &  z_{11} & 1\; & z_{21} \\
\end{array}
\right ) \nonumber 
\eear
$$ \mathcal{Z}_{\mathcal{U}_2} = \left ( \begin{array}{ccc||ccc}
z_{12} & z_{22} & 1  & \theta_{12} & \theta_{22}  & \;0 \\
\hline \hline
- \theta_{12} & - \theta_{22} & 0 &  z_{12} & z_{22} & \; 1\\
\end{array}
\right )
$$
and setting the nilpotent coordinates to zero it is apparent that the underlying manifold is given by $\proj 2$.
\begin{theorem}[$\mani$ is $\proj{2}_\Pi$] The non-projected supermanifold given by $\mani \defeq (\proj 2, \Pi \Omega^1_{\proj 2}, \lambda)$ where $\lambda $ is a non-zero representative of $\omega \in H^1 (\mathcal{T}_{\proj 2} \otimes Sym^2 \Pi \Omega^1_{\proj 2} ) \cong \mathbb{C}$ is the $\Pi$-projective plane $\proj {2}_\Pi$.
\end{theorem}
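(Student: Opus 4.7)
The plan is to derive the transition functions of $\proj{2}_\Pi$ directly from the super big cell matrices $\mathcal{Z}_{\mathcal{U}_i}$ displayed above, and to compare them with the transition formulas (\ref{trans2}) for $\mani$. By Theorem \ref{dim2}, a supermanifold of dimension $n|2$ is determined up to isomorphism by the triple (reduced manifold, fermionic sheaf, obstruction class). Moreover, any scalar automorphism $c \cdot \operatorname{id}_{\mathcal{F}_\mani}$ induces multiplication by $c^2$ on $H^1(\mathcal{T}_{\proj 2} \otimes Sym^2 \mathcal{F}_\mani) \cong \mathbb{C}$, so over the complex numbers any non-zero $\lambda$ can be normalized to $1$. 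It therefore suffices to match transition functions for a single non-zero value of $\lambda$.

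Concretely, I would work on $\mathcal{U}_0 \cap \mathcal{U}_1$ and look for a $2 \times 2$ supermatrix $A$ such that $A \cdot \mathcal{Z}_{\mathcal{U}_0} = \mathcal{Z}_{\mathcal{U}_1}$. Reading off the first and fourth columns of the target $\mathcal{Z}_{\mathcal{U}_1}$ immediately fixes $A$ as the $\Pi$-symmetric matrix $\begin{pmatrix} z_{11} & \theta_{11} \\ -\theta_{11} & z_{11}\end{pmatrix}$. The remaining columns then yield coupled equations for the unknowns $z_{10}, z_{20}, \theta_{10}, \theta_{20}$ in terms of the $\mathcal{U}_1$ coordinates. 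Solving them while using $\theta_{11}^2 = 0$ to kill nilpotent cross terms collapses the system to
\begin{equation*}
z_{10} = \frac{1}{z_{11}}, \quad z_{20} = \frac{z_{21}}{z_{11}} + \frac{\theta_{11}\theta_{21}}{z_{11}^2}, \quad \theta_{10} = -\frac{\theta_{11}}{z_{11}^2}, \quad \theta_{20} = -\frac{z_{21}\theta_{11}}{z_{11}^2} + \frac{\theta_{21}}{z_{11}},
\end{equation*}
which is exactly (\ref{trans2}) with $\lambda = 1$.

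Reading off the odd part of these formulas confirms that $\theta_{1i}, \theta_{2i}$ transform as $dz_{1i}, dz_{2i}$, so that the fermionic sheaf of $\proj{2}_\Pi$ is indeed $\Pi\Omega^1_{\proj 2}$; reading off the even part, the surviving correction term $\theta_{11}\theta_{21}/z_{11}^2$ identifies $\proj{2}_\Pi$ as non-projected with obstruction class corresponding to $\lambda = 1 \neq 0$. The analogous computation on $\mathcal{U}_1 \cap \mathcal{U}_2$ and $\mathcal{U}_2 \cap \mathcal{U}_0$ should then follow by cyclic permutation of indices. The main delicate point will be keeping careful track of the $\Pi$-symmetric structure of $A$ and the consequent coupling between the two rows of $\mathcal{Z}_{\mathcal{U}_i}$, but I do not expect this to be a genuine obstacle, since the second row of the equations $A \cdot \mathcal{Z}_{\mathcal{U}_0} = \mathcal{Z}_{\mathcal{U}_1}$ reproduces the same system as the first row. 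Combined with Theorem \ref{dim2} and the rescaling argument above, this yields $\proj{2}_\Pi \cong \mani$.
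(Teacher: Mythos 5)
Your proposal is correct and takes essentially the same route as the paper: both derive the transition functions of $\proj{2}_\Pi$ on each overlap from the super big-cell matrices via row operations (your supermatrix $A$ is precisely the change of basis realizing the paper's ``allowed rows operation''), match the result with (\ref{trans2}), and reduce to $\lambda=1$ by a rescaling. Your explicit observation that rescaling the odd coordinates multiplies the obstruction class by a nonzero square, so any $\lambda\neq 0$ can be normalized to $1$, merely spells out what the paper compresses into ``a scaling or a change of coordinates.''
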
  
\begin{proof} We have already seen that the topological space underlying $\proj {2}_\Pi$ is $\proj {2}$. To prove that the two spaces are the same supermanifold we consider the structure sheaf $\mathcal{O}_{\proj {2}_\Pi}$ of $\proj {2}_\Pi $ and we prove 
that the transition functions among its affine charts coincide with those of $\mani.$ To this end, by allowed row operations we get
\begin{align}
& z_{10} = \frac{1}{z_{11}}, \qquad z_{20} = \frac{z_{21}}{z_{11}} + \frac{\theta_{11} \theta_{21}}{(z_{11})^2},  \qquad \theta_{10} = - \frac{\theta_{11}}{(z_{11})^2}, \qquad \theta_{20} = - \frac{z_{21}\theta_{11}}{(z_{11})^2} + \frac{\theta_{21}}{z_{11}}, \nonumber
\end{align}
these coincide with the transition functions we found in (\ref{trans2}), once it is set $\lambda = 1$ - which is always possible by means of a scaling or a change of coordinates. By analogous calculations one checks that the same happens in the other intersections, thus showing $\proj {2}_\Pi = (\proj 2, \Pi \Omega_{\proj n}, \lambda)$.
\end{proof}
\begin{remark} Indeed the result above is a particular case of a much more general result that relates $\Pi$-projective spaces and the cotangent sheaf $\Omega^1_{\proj n }$ of ordinary projective spaces, proved in
\cite{Noja},  Theorem 4.3.\end{remark}

\noindent The previous theorem has the following obvious corollary, which can be seen as an improvement and a quantitative version of the result in Theorem \ref{embeddingth} in the case at hand.  
\begin{corollary} The supermanifold $\mani \defeq (\proj 2, \Pi \Omega^1_{\proj 2}, \lambda)$ can be embedded into $G(1|1, \mathbb{C}^{3|3}).$ 
\end{corollary}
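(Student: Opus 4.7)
The plan is to combine the preceding theorem with the very definition of $\Pi$-projective space. By that theorem, $\mani = (\proj 2, \Pi\Omega^1_{\proj 2}, \lambda)$ is isomorphic to the $\Pi$-projective plane $\proj {2}_\Pi$, so producing an embedding $\mani \hookrightarrow G(1|1, \mathbb{C}^{3|3})$ reduces to producing an embedding $\proj {2}_\Pi \hookrightarrow G(1|1, \mathbb{C}^{3|3})$. But by the definition recalled in \S \ref{subsection:Pi}, $\proj {2}_\Pi = G\Pi(1|1, \mathbb{C}^{3|3})$ is literally the sub-superscheme of $G(1|1, \mathbb{C}^{3|3})$ parametrising those $1|1$-dimensional subspaces that are invariant under the fixed odd involution $p$ on $\mathbb{C}^{3|3}$. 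The tautological inclusion is then the desired embedding, and the corollary follows.

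To make this transparent at the level of local charts, observe that the super big-cell matrices $\mathcal{Z}_{\mathcal{U}_i}$ displayed in the proof of the preceding theorem are already of the form of standard $(1|1) \times (3|3)$ local charts for $G(1|1, \mathbb{C}^{3|3})$: the only distinguishing feature of the $\Pi$-symmetric locus is that the second row of $\mathcal{Z}_{\mathcal{U}_i}$ is determined by the first via $p$. Forgetting this constraint gives a chart-wise closed embedding into the local affine model of $G(1|1, \mathbb{C}^{3|3})$, and the four local coordinates $(z_{1i}, z_{2i}, \theta_{1i}, \theta_{2i})$ appear as four independent entries of $\mathcal{Z}_{\mathcal{U}_i}$, so both the map on points and the differential are injective.

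Equivalently, and more in the spirit of \S \ref{sec:embedding}, one can invoke the universal property: let $\mathcal{E}$ be the underlying rank $1|1$ locally free sheaf of the tautological $\Pi$-invertible sheaf $\mathcal{O}_\Pi(1)$ on $\proj {2}_\Pi$ (the $\Pi$-symmetry is forgotten), and consider the surjection $\mathbb{C}^{3|3} \otimes \mathcal{O}_{\proj {2}_\Pi} \twoheadrightarrow \mathcal{E}$ induced by the six $p$-invariant global sections generating $\mathcal{O}_\Pi(1)$. This surjection defines a morphism $\proj {2}_\Pi \to G(1|1, \mathbb{C}^{3|3})$ by the universal property recalled in \S \ref{sec:embedding}, and Proposition \ref{prop:suffemb} applies; the only step that must be checked is the criterion on rank of $V \to \mathcal{E} \otimes_{\mathcal{O}_\mani} \mathcal{O}_\mathcal{Z}$ for length-$2$ subschemes $\mathcal{Z}$, and this is immediate from the explicit form of the $\mathcal{Z}_{\mathcal{U}_i}$'s. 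No delicate step arises because the embedding is essentially tautological; the only subtlety is the consistent bookkeeping of the $\Pi$-symmetry when passing from $G\Pi(1|1, \mathbb{C}^{3|3})$ to the ambient $G(1|1, \mathbb{C}^{3|3})$.
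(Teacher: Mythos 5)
Your proposal is correct and follows essentially the same route as the paper: the preceding theorem identifies $\mani$ with $\proj{2}_\Pi$, and $\proj{2}_\Pi = G\Pi(1|1,\mathbb{C}^{3|3})$ sits by its very definition as a closed sub-supermanifold of $G(1|1,\mathbb{C}^{3|3})$, giving the tautological (linear) embedding. The additional chart-level and universal-property verifications you supply are harmless elaborations of the same argument, which the paper dispatches in two lines.
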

\begin{proof} Since we have shown that $\mani = \proj {2}_\Pi$ and the $\Pi$-projective plane $\proj {2}_\Pi$ can be presented as a closed sub-supermanifold inside $G(1|1, \mathbb{C}^{3|3})$, the same holds true for $\mani$ and we have a linear 
embedding of $\mani$ into $G(1|1, \mathbb{C}^{3|3}).$
\end{proof}


\noindent {\bf Acknowledgements:} We thank the anonymous referees for their remarks and suggestions that definitely helped us improving the quality and readability of the paper. We thank Bert van Geemen for several discussions and suggestions. We also thank Yuri Manin for his stimulating suggestions and his availability, and for having gotten us in touch with his former students Ivan Penkov, Igor Skornyakov, and Vera Serganova. In particular, we thank Ivan Penkov and Igor Skornyakov for very useful correspondence and for having sent us their previous work on super Grassmannians. S.N. thanks Ron Donagi for having suggested this stimulating research topic.


\end{document}